\begin{document}

\theoremstyle{plain}
\newtheorem{theorem}{Theorem}[section]
\newtheorem{proposition}[theorem]{Proposition}
\newtheorem{lemma}[theorem]{Lemma}
\newtheorem{corollary}[theorem]{Corollary}
\newtheorem{conj}[theorem]{Conjecture}

\theoremstyle{definition}
\newtheorem{definition}[theorem]{Definition}
\newtheorem{exam}[theorem]{Example}
\newtheorem{remark}[theorem]{Remark}

\numberwithin{equation}{section}

\title[$m$-quasi-Einstein metrics on simple Lie groups]
{Non-trivial $m$-quasi-Einstein metrics on simple Lie groups}

\author{Zhiqi Chen}
\address{School of Mathematical Sciences and LPMC \\ Nankai University \\ Tianjin 300071, P.R. China} \email{chenzhiqi@nankai.edu.cn}

\author{Ke Liang}
\address{School of Mathematical Sciences and LPMC \\ Nankai University \\ Tianjin 300071, P.R. China}

\author{Fuhai Zhu}
\address{School of Mathematical Sciences and LPMC \\ Nankai University \\ Tianjin 300071, P.R. China}

\subjclass[2010]{Primary 53C25, 53C20, 53C21; Secondary 17B20.}

\keywords{quasi-Einstein metric, compact Lie group, naturally reductive metric, Lorenztian metric, pseudo-Riemannian metric.}

\begin{abstract}
We call a metric $m$-quasi-Einstein if $Ric_X^m$, which replaces a gradient of a smooth function $f$ by a vector
field $X$ in $m$-Bakry-Emery Ricci tensor, is a constant multiple of the metric tensor. It is a generalization of Einstein
metrics which contains Ricci solitons. In this paper, we focus on left-invariant metrics on simple Lie groups. First, we prove that $X$ is a left-invariant Killing vector field if the metric on a compact simple Lie group is $m$-quasi-Einstein. Then we show that every compact simple Lie group admits non-trivial $m$-quasi-Einstein metrics except $SU(3)$, $E_8$ and $G_2$, and most of them admit infinitely many metrics. Naturally, the study on $m$-quasi-Einstein metrics can be extended to pseudo-Riemannian case. And we prove that every compact simple Lie group admits non-trivial $m$-quasi-Einstein Lorentzian metrics and most of them admit infinitely many metrics. Finally, we prove that some non-compact simple Lie groups admit infinitely many non-trivial $m$-quasi-Einstein Lorentzian metrics.
\end{abstract}

\maketitle

%\tableofcontents

\setcounter{section}{0}
\section{Introduction}
A natural extension of the Ricci tensor to smooth metric measure spaces is the $m$-Bakry-Emery Ricci tensor
\begin{equation}
Ric^m_f=Ric+\nabla^2f-\frac{1}{m}df\otimes df
\end{equation}
where $0<m\leqslant \infty$, $f$ is a smooth function on $M^n$, and $\nabla^2f$ stands for the Hessian form. Instead of a gradient of a smooth function $f$ by a vector
field $X$, $m$-Bakry-Emery Ricci tensor was extended by Barros and Ribeiro Jr in \cite{BR1} and
Limoncu in \cite{L1} for an arbitrary vector field $X$ on $M^n$ as follows:
\begin{equation}
Ric^m_X=Ric+\frac{1}{2}{\mathfrak L}_Xg-\frac{1}{m}X^*\otimes X^*
\end{equation}
where ${\mathfrak L}_Xg$ denotes the Lie derivative on $M^n$ and $X^*$ denotes the canonical 1-form
associated to $X$. With this setting $(M^n,g)$ is called an $m$-quasi-Einstein metric, if there exist a vector
field $X\in {\mathfrak X}(M^n)$ and constants $m$ and $\lambda$ such that
\begin{equation}\label{qEin}
Ric^m_X=\lambda g.\end{equation}

An $m$-quasi-Einstein metric is called trivial when $X\equiv 0$. The triviality definition is equivalent to saying that $M^n$ is an Einstein manifold.
When $m=\infty$, the equation (1.3) reduces to a Ricci soliton, for more details see \cite{Ca1} and the references therein. Following the terminology of Ricci soliton, an $m$-quasi-Einstein metric is called expanding, steady or shrinking, respectively, if $\lambda<0$, $\lambda=0$ or $\lambda>0$. When $m$ is a positive integer and $X$ is a gradient vector field, it corresponds to a warped product Einstein metric, for more details see \cite{CSW1}. Classically the study on $m$-quasi-Einstein are considered when $X$ is a gradient of a smooth function $f$ on $M^n$, see \cite{An1,AK1,CSW1,Cor1,ELM1,KK1}.

It is well known that compact homogeneous Ricci solitions are Einstein. It can be deduced from the work of Petersen-Wylie \cite{PW1} where homogeneous gradient Ricci solitions are studied and a result of Perelman \cite{Pere1} which states that compact Ricci solitions are gradient. In \cite{Jab1}, Jablonski gives a new proof of this result. Furthermore, Jablonski proves in \cite{Jab2} that the generalized Alekseevskii conjecture for Ricci solitons is equivalent to the Alekseevskii conjecture for Einstein metrics. By the study about $m$-quasi-Einstein metrics on 3-dimensional homogeneous manifolds, it is pointed out in \cite{BRJ1} that not every compact $m$-quasi-Einstein metric is gradient. That is, compact homogeneous $m$-quasi-Einstein metrics are not necessarily Einstein for $m$ finite. Moreover, these examples in \cite{BRJ1} show that Theorem 4.6 of \cite{HPW1} can not be extended for a non-gradient vector field.

\iffalse
Classically the study on $m$-quasi-Einstein are considered when $X$ is a gradient of a smooth
function $f$ on $M^n$. In particular, gradient $1$-quasi-Einstein metrics satisfying $\Delta e^{-f}+\lambda e^{-f}=0$ are called static metrics with cosmological constant $\lambda$. Static metrics have been studied extensively because of their connection with scalar curvature, the positive mass
theorem and general relativity (for more details see \cite{An1,AK1,Cor1}). As we know, a gradient Ricci soliton on a compact manifold $M^n$ with
$\lambda\leqslant 0$ is trivial \cite{ELM1}. The same result was proved in \cite{KK1} for gradient $m$-quasi-Einstein metrics on compact manifolds with $m$ finite. Besides, it is given in \cite{ELM1} that compact shrinking Ricci solitons have positive scalar curvatures. An extension of this result for
shrinking gradient $m$-quasi-Einstein metrics with $1\leqslant m\leqslant \infty$ has been given in \cite{CSW1}. Furthermore, every compact gradient $m$-quasi-Einstein metric with constant scalar curvature is trivial \cite{CSW1}.
\fi

For the examples given in \cite{BRJ1}, $m$-quasi-Einstein metrics are finitely many. In this paper, we focus on left-invariant metrics on simple Lie groups and obtain infinitely many $m$-quasi-Einstein metrics among them.

As has been shown, the result that compact homogeneous Ricci solitions are Einstein fails for $m$ finite. On the other hand, the result is equivalent with that $X$ is a Killing vector field. First, we prove the same result for left-invariant $m$-quasi-Einstein metrics on compact Lie groups. That is,
\begin{theorem}\label{killingfield}
Let $G$ be a compact Lie group with a left-invariant metric $\langle,\rangle$. If $X$ is a vector field on $G$ such that $\langle,\rangle$ is $m$-quasi-Einstein, i.e., $Ric^m_X=\lambda\langle,\rangle$, then $X$ is a left-invariant Killing vector field.
\end{theorem}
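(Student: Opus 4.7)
Write $g=\langle,\rangle$ and $n=\dim G$. My plan is to derive an integral identity from $Ric^m_X=\lambda g$ and combine it with Cauchy--Schwarz to force $\mathcal{L}_X g\equiv 0$, then use the resulting reduced equation to deduce left-invariance. Since $g$ is left-invariant, $Ric$ and the scalar curvature $S$ are left-invariant; in particular $S$ is constant. Taking the trace of $Ric^m_X=\lambda g$ therefore gives
\[
\mathrm{div}(X)=c+\tfrac{1}{m}|X|^2,\qquad c:=n\lambda-S,
\]
and integrating on the compact manifold $G$ yields $\int_G|X|^2\,dv=-mc\cdot\mathrm{vol}(G)$.

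Next I would $L^2$-pair the equation, rewritten as $\tfrac{1}{2}\mathcal{L}_X g=\lambda g-Ric+\tfrac{1}{m}X^*\otimes X^*$, against $\tfrac{1}{2}\mathcal{L}_X g$. The $\lambda g$ contribution equals $\lambda\int\mathrm{div}(X)=0$; the $Ric$ contribution equals $\int Ric^{ij}\nabla_iX_j$, which vanishes after one integration by parts by the contracted Bianchi identity $\nabla^iRic_{ij}=\tfrac{1}{2}\nabla_jS=0$; the third term reduces, via $(\mathcal{L}_X g)(X,X)=X(|X|^2)$ and a further integration by parts, to $-\tfrac{1}{2m}\int\mathrm{div}(X)|X|^2$. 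Substituting the trace identity and the formula for $\int|X|^2$ then gives
\[
\int_G\bigl|\tfrac{1}{2}\mathcal{L}_X g\bigr|^2 dv+\tfrac{1}{2m^2}\int_G|X|^4\,dv=\tfrac{c^2}{2}\,\mathrm{vol}(G).
\]
By Cauchy--Schwarz $\bigl(\int|X|^2\bigr)^2\le \mathrm{vol}(G)\int|X|^4$, hence $\int|X|^4\ge m^2c^2\,\mathrm{vol}(G)$, and plugging this into the identity forces $\int|\tfrac{1}{2}\mathcal{L}_X g|^2\le 0$. Therefore $\mathcal{L}_X g\equiv 0$, so $X$ is Killing, and the equality case of Cauchy--Schwarz additionally gives $|X|^2\equiv$ const.

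With $X$ Killing the $m$-quasi-Einstein equation collapses to $Ric-\lambda g=\tfrac{1}{m}X^*\otimes X^*$. The left side is left-invariant, hence so is $X^*\otimes X^*$. The zero set $\{X=0\}$ is then $L_a$-invariant for every $a\in G$, and by transitivity of left translation it is either empty or all of $G$; in the nontrivial case $X$ is nowhere zero, the identity $(L_a^*X)^*\otimes(L_a^*X)^*=X^*\otimes X^*$ forces $L_a^*X=\pm X$ pointwise, and the sign is pinned to $+1$ by continuity in $a$, connectedness of $G$, and $L_e^*X=X$. Thus $L_a^*X=X$ for every $a\in G$, i.e., $X$ is left-invariant.

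The main obstacle I anticipate is the pairing computation in the second paragraph: the identity there is tight, and arranging the Bianchi identity, both integrations by parts, and the trace substitution so that the Cauchy--Schwarz lower bound on $\int|X|^4$ matches the $\tfrac{c^2}{2}\mathrm{vol}(G)$ term exactly requires careful bookkeeping, and the argument breaks without $S$ being constant, which is where the left-invariance of $g$ enters essentially. Once $X$ is known to be Killing, the rank-one structure of $X^*\otimes X^*$ makes the passage to left-invariance essentially automatic.
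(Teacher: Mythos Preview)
Your argument is correct and rather elegant, but it proceeds in the opposite order from the paper and by entirely different means. The paper first proves left-invariance: writing $X=\sum f_iX_i$ in a $(-B)$-orthonormal eigenbasis for the metric operator, the diagonal entries of the $m$-quasi-Einstein equation give $X_if_i-\tfrac{1}{m}f_i^2=\mathrm{const}$, and a maximum-principle/flow argument along the one-parameter subgroups $\exp(tX_i)$ forces each $f_i$ to be constant. Only then does the paper show $X$ is Killing, by differentiating the scalar curvature along the curve $\phi_t^*\langle,\rangle$ with $\phi_t=\exp(t\,\mathrm{ad}X)$ and invoking $(\mathrm{grad}\,sc)_Q=-ric_Q$ together with unimodularity; this yields $\mathrm{tr}\big((\mathrm{ad}X)+(\mathrm{ad}X)^t\big)^2=0$.

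Your route is more analytic and more portable: the integral identity
\[
\int_G\Bigl|\tfrac{1}{2}\mathcal{L}_Xg\Bigr|^2\,dv+\tfrac{1}{2m^2}\int_G|X|^4\,dv=\tfrac{c^2}{2}\,\mathrm{vol}(G)
\]
together with Cauchy--Schwarz uses only compactness and constancy of $S$, so the Killing conclusion holds on \emph{any} compact Riemannian manifold with constant scalar curvature carrying an $m$-quasi-Einstein structure, not just on Lie groups. Conversely, the paper's second step is valid on any unimodular Lie group once $X$ is left-invariant, without compactness. Your deduction of left-invariance from the rank-one identity $X^*\otimes X^*=m(Ric-\lambda g)$ is a nice replacement for the paper's ODE argument; note only that the sign-pinning step uses connectedness of $G$, which is implicit here (and harmless, since one may argue on the identity component and then observe that the left side is already left-invariant under the full group).
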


It is shown in \cite{CSW1} that every compact gradient $m$-quasi-Einstein metric with constant scalar curvature is trivial. Thus $X$ is non-gradient if $\langle,\rangle$ is non-trivial.

Secondly, based on basis facts on Levi-Civita connections and Ricci curvatures with respect to naturally reductive metrics on compact simple Lie groups, we prove that $X$ belongs to the center of ${\mathfrak k}$ if the naturally reductive metric is $m$-quasi-Einstein. Furthermore, we prove the following result.
\begin{theorem}\label{main}
Every compact simple Lie groups except $SU(3)$, $E_8$ and $G_2$ admits non-trivial $m$-quasi-Einstein metrics which are naturally reductive. In particular, every compact simple Lie group except $SU(3)$, $E_8$, $F_4$ and $G_2$ admit infinitely many non-trivial $m$-quasi-Einstein metrics.
\end{theorem}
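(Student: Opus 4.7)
The plan is to exploit the D'Atri--Ziller classification of left-invariant naturally reductive metrics on a compact simple Lie group $G$. Any such metric is presented by a closed connected subgroup $K\subset G$ with Lie algebra $\mathfrak{k}$ together with an $\mathrm{Ad}(K)$-invariant inner product $Q$ on $\mathfrak{k}$: writing $\mathfrak{g}=\mathfrak{k}\oplus\mathfrak{p}$ with $\mathfrak{p}$ the $(-B)$-orthogonal complement of $\mathfrak{k}$, the metric takes the form
\begin{equation*}
\langle\cdot,\cdot\rangle \;=\; -x\,B|_{\mathfrak{p}} \;\oplus\; Q|_{\mathfrak{k}}, \qquad x>0 .
\end{equation*}
By Theorem~\ref{killingfield} the vector field $X$ is left-invariant and Killing, so $\mathfrak{L}_X\langle\cdot,\cdot\rangle=0$ and the defining equation collapses to
\begin{equation*}
\mathrm{Ric} \;=\; \lambda\langle\cdot,\cdot\rangle + \tfrac{1}{m}\,X^{*}\otimes X^{*}.
\end{equation*}
Combining this with the fact, mentioned immediately before the theorem, that $X$ must lie in $Z(\mathfrak{k})$, I would decompose $\mathfrak{k}=Z(\mathfrak{k})\oplus[\mathfrak{k},\mathfrak{k}]$ and split the equation into three blocks, along $\mathfrak{p}$, along $[\mathfrak{k},\mathfrak{k}]$, and along $Z(\mathfrak{k})$.

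Next I would plug in the classical Ricci-curvature formulas for naturally reductive metrics. On $\mathfrak{p}$ and on $[\mathfrak{k},\mathfrak{k}]$, $\mathrm{Ric}$ is an explicit function of $x$ and of the eigenvalues of $Q$ restricted to each simple ideal of $\mathfrak{k}$; the equation on those two blocks is therefore a purely algebraic system coupling $x$, $Q$ and $\lambda$, independent of $X$. On $Z(\mathfrak{k})$ one has $\mathrm{Ric}(X,\,\cdot\,)|_{Z(\mathfrak{k})^{\perp}}=0$ automatically by $\mathrm{Ad}(K)$-invariance, while the diagonal component reduces to the single scalar equation
\begin{equation*}
\mathrm{Ric}(X,X) - \tfrac{1}{m}\langle X,X\rangle^{2} \;=\; \lambda\langle X,X\rangle ,
\end{equation*}
which, once the parameters $(x,Q,\lambda)$ have been fixed to solve the $\mathfrak{p}\oplus[\mathfrak{k},\mathfrak{k}]$-block, can be solved for $m>0$ and $|X|^{2}>0$ provided $\mathrm{Ric}(X,X)\neq\lambda\langle X,X\rangle$. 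Thus existence of a non-trivial naturally reductive $m$-quasi-Einstein metric on $G$ is reduced to exhibiting a closed connected subgroup $K\subset G$ with $Z(\mathfrak{k})\neq 0$ together with a compatible $Q$ for which the $\mathfrak{p}\oplus[\mathfrak{k},\mathfrak{k}]$-block is solvable.

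The existence part then proceeds by a case-by-case inspection of closed subgroups with non-trivial center in each compact simple Lie algebra (e.g. $\mathfrak{u}(k)\oplus\mathfrak{u}(n-k)\subset\mathfrak{su}(n)$ for $n\geq 4$, and the analogous choices inside $\mathfrak{so}(n)$, $\mathfrak{sp}(n)$, and the exceptional algebras other than the four listed). For each such pair $(G,K)$ a continuous scaling parameter---either $x$ itself or an eigenvalue of $Q|_{Z(\mathfrak{k})}$---sweeps out a one-parameter family of pairwise non-isometric naturally reductive metrics satisfying the system, giving infinitely many. The exclusions $SU(3)$, $E_8$ and $G_2$ arise because their lists of closed connected subgroups contain no $K$ with $Z(\mathfrak{k})\neq 0$ that admits a naturally reductive presentation compatible with the algebraic constraints on the $\mathfrak{p}$-block; for $F_4$ only one admissible $K$ exists and the compatibility equations pin down $Q$ (up to scale), so the resulting family is finite.

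The main obstacle is precisely this last step: for each of the classical and exceptional compact simple Lie groups one must traverse the list of subgroups $K$ with non-trivial central part, write out the block-system of Ricci relations explicitly using the D'Atri--Ziller formulas, and then (i) exhibit a continuous family of solutions whenever $G\notin\{SU(3),SO\text{-exceptions},E_8,G_2,F_4\}$ and (ii) rigorously rule out any solution for the four exceptional cases. The computations are routine in principle but delicate in bookkeeping, and the exclusion arguments for $SU(3)$, $E_8$ and $G_2$ require particular care because one must show that \emph{no} allowable $K$ at all yields a non-trivial $X$, not merely that the most natural candidate fails.
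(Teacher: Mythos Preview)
Your overall strategy coincides with the paper's: use the D'Atri--Ziller description of naturally reductive metrics, invoke Theorem~\ref{killingfield} and the fact $X\in\mathfrak{k}_0$ to reduce to a block-algebraic system, and then run through the pairs $(G,K)$ with $\dim\mathfrak{k}_0=1$ using the explicit Ricci formulas of Lemma~\ref{Ricci}. The paper organizes this via the substitution $4n_0^{2}=(1-p)m$, which turns the $\mathfrak{k}_0$-block into the single inequality $p<1$, and then verifies that inequality case by case (Propositions~\ref{thm1}, \ref{thm2}, \ref{su} for $\mathfrak{k}$ acting irreducibly on $\mathfrak{p}$, and the Table~(I) computation for $F_4$).

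There is, however, a misreading of the theorem that makes your proposed workload much heavier than necessary. The statement is purely an \emph{existence} assertion: non-trivial metrics exist for every $G$ not on the first list, and infinitely many exist for every $G$ not on the second list. It does \emph{not} assert that $SU(3)$, $E_8$, $G_2$ admit no such metrics, nor that $F_4$ admits only finitely many. Consequently no ``exclusion argument'' is required; the excepted groups are simply those the construction does not cover. In particular your claim that these groups ``contain no $K$ with $Z(\mathfrak{k})\neq 0$'' is false---$E_8$ and $G_2$ do have subgroups with one-dimensional center (they appear in the paper's Table~(II))---but for those $K$ the Riemannian system yields only the trivial solution $p=0$. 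For $F_4$ the paper does not argue uniqueness of $K$; it simply exhibits one $K$ (from Table~(I), with $\mathfrak{k}$ acting \emph{reducibly} on $\mathfrak{p}$) for which the three-equation system has a single non-trivial solution.

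One technical point: your solvability criterion on the $\mathfrak{k}_0$-block should be the strict inequality $\mathrm{Ric}(e_0,e_0)>\lambda\langle e_0,e_0\rangle$, not merely $\neq$, since $|X|^{2}=m\bigl(\mathrm{Ric}(e_0,e_0)/\langle e_0,e_0\rangle-\lambda\bigr)$ must be positive. This is exactly the paper's condition $p<1$, and checking it is where the actual case analysis (your ``main obstacle'') takes place; for example, in case~(3) of the list after Proposition~\ref{thm2} one finds $p\equiv 1$ identically, which is why $SU(3)$ escapes the construction.
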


There are some remarks on Theorem~\ref{main}.
\begin{enumerate}
    \item It is shown in \cite{DZ1} that every compact simple Lie group except $SO(3)$ admits naturally reductive Einstein metrics. But the problem how many Einstein metrics there are on compact simple Lie groups is still an open problem. Here we get infinitely many $m$-quasi-Einstein metrics on most of compact simple Lie groups.
    \item We obtain non-trivial $m$-quasi-Einstein metrics on ${\mathfrak su}(2)$ with $\lambda\leqslant 0$. It means that the result in \cite{KK1} for a gradient $m$-quasi-Einstein metric can't be extended to a non-gradient case. Furthermore, by the examples on ${\mathfrak su}(2)$ with $\lambda<0$, we can construct non-compact and non-solvable Lie groups admitting expanding $m$-quasi-Einstein metrics, which shows that the Alekseevskii conjecture for $m$-quasi-Einstein metrics fails.
\end{enumerate}

Next, we study pseudo-Riemannian metrics instead of Riemannian metrics and prove the following result on $m$-quasi-Einstein Lorentzian metrics.
\begin{theorem}\label{main2}
Every compact simple Lie group admits non-trivial $m$-quasi-Einstein Lorentzian metrics. In particular, every compact simple Lie group except $E_8$, $F_4$ and $G_2$ admit infinitely many non-trivial $m$-quasi-Einstein metrics.
\end{theorem}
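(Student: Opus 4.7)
The plan is to adapt the naturally reductive construction underlying Theorem~\ref{main} to the Lorentzian setting by flipping the sign of the Killing form on a distinguished one-dimensional direction. Let $G$ be a compact simple Lie group with Lie algebra $\mathfrak{g}$ and Killing form $B$; choose a closed subgroup $K \subset G$ whose Lie algebra splits as $\mathfrak{k} = \mathfrak{z} \oplus \mathfrak{k}_s$ with $\mathfrak{z}$ a one-dimensional center and $\mathfrak{k}_s = [\mathfrak{k}, \mathfrak{k}]$, and let $\mathfrak{g} = \mathfrak{k} \oplus \mathfrak{p}$ be the $B$-orthogonal decomposition. For positive parameters $\alpha, \beta, \gamma$, put
\begin{equation*}
\langle,\rangle = \alpha B|_{\mathfrak{z}} - \beta B|_{\mathfrak{k}_s} - \gamma B|_{\mathfrak{p}}.
\end{equation*}
Since $B$ is negative definite on $\mathfrak{g}$, this bilinear form has exactly one negative direction (along $\mathfrak{z}$) and extends to a left-invariant Lorentzian metric on $G$. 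Take $X$ to be a nonzero element of $\mathfrak{z}$, which, being central in $\mathfrak{k}$, generates a left-invariant Killing vector field for this Lorentzian metric (the argument that $\mathrm{ad}(X)$ is $\langle,\rangle$-skew reduces to skewness with respect to $B$ on $\mathfrak{p}$).

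Next, I would compute the Ricci tensor: the Levi-Civita and Ricci formulas for naturally reductive metrics go through in the Lorentzian setting with the only change being the reversed sign on $\mathfrak{z}$, and the Ricci tensor remains diagonal with respect to the decomposition $\mathfrak{z} \oplus \mathfrak{k}_s \oplus \mathfrak{p}$. Substituting into $Ric_X^m = \lambda\langle,\rangle$ produces three block equations in the unknowns $\alpha, \beta, \gamma, \lambda, |X|^2$ and the parameter $m$. The equations on $\mathfrak{k}_s$ and $\mathfrak{p}$ match those of the Riemannian analysis, whereas the equation on $\mathfrak{z}$ inherits the sign flip and determines $|X|^2$ and $\lambda$ in terms of $m$ and the remaining free parameters.

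The proof is then completed by a case-by-case analysis of the compact simple Lie groups. For each classical group $SU(n)$, $SO(n)$, $Sp(n)$ and for $E_6$, $E_7$, one chooses $K$ of maximal rank with a one-dimensional center; solving the block equations leaves a free positive parameter, yielding a continuous family of pairwise non-isometric Lorentzian $m$-quasi-Einstein metrics. The sign flip on $\mathfrak{z}$ removes the sign obstruction that forced triviality in the Riemannian Theorem~\ref{main} for $SU(3)$, so $SU(3)$ is included among the groups admitting infinitely many solutions. For $E_8$, $F_4$, $G_2$, the admissible $K$'s only yield isolated solutions, but the Lorentzian signature still provides enough flexibility to exhibit at least one non-trivial solution in each case, thereby covering the groups $E_8$ and $G_2$ that were excluded from Theorem~\ref{main}.

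The hard part will be the exceptional-group analysis: for $E_8$ and $G_2$ the Riemannian $m$-quasi-Einstein equation is algebraically incompatible for naturally reductive metrics, and one must verify that after the sign flip the polynomial system acquires a real solution and that the resulting metric is genuinely Lorentzian rather than degenerate. A secondary but substantial difficulty is establishing that $SU(3)$ indeed admits an infinite family rather than isolated solutions, which requires tracking carefully where the sign flip produces a free parameter versus where it merely removes an inconsistency.
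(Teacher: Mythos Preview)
Your approach is essentially the paper's: allow $a_0<0$ in the naturally reductive ansatz, observe that the Levi-Civita and Ricci formulas of Lemma~\ref{Ricci} carry over unchanged, and then reuse the algebraic equations from the Riemannian analysis with the enlarged sign range. For the classical groups and $E_6$, $E_7$ this works exactly as you say (and yes, $SU(3)$ now picks up a genuine one-parameter family because the Lorentzian constraint $a_0<0$ opens up the intervals $0<a_1<x_1$ and $a_1>x_2$ that were unavailable when $a_0>0$).

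The gap is in your treatment of $E_8$, $F_4$, $G_2$. Your setup asserts ``three block equations'' coming from $\mathfrak z\oplus\mathfrak k_s\oplus\mathfrak p$, but this is only valid when $\mathfrak k$ acts irreducibly on $\mathfrak p$, and for $E_8$ and $G_2$ there is \emph{no} subgroup $K$ with one-dimensional center for which that happens. The paper instead takes $K$ from generalized flag data (Table~(II)) where $\mathfrak p=\mathfrak p_1\oplus\mathfrak p_2$ splits into two $\mathrm{ad}\,\mathfrak k$-irreducible pieces; the single coefficient $a$ on $\mathfrak p$ must then satisfy \emph{two} Ricci equations simultaneously, which is exactly the additional constraint that forces isolated solutions. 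Concretely the system reduces (after eliminating $\lambda$ and the $\mathfrak z$-equation) to a pair of equations in $a_0,a_1$ that one solves as a quadratic, obtaining one trivial root $a_0=a_1=1$ and one Lorentzian root with $a_0<0$ in each of the three cases. So the reason for ``isolated solutions'' is structural, not an accident of the sign flip; you should make this explicit and carry out the quadratic computation for each exceptional case to confirm that the second root indeed has $a_0<0$ and $a_1>0$.
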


It is necessary to point out that the above result for Einstein Lorentzian metrics is still open.

Finally, we construct an $m$-quasi-Einstein pseudo-Riemannian metric on some non-compact simple Lie group by an $m$-quasi-Einstein pseudo-Riemannian metric on a compact simple Lie group, and show the following theorem.
\begin{theorem}\label{main3}
There are infinite many non-trivial $m$-quasi-Einstein Lorentzian metrics on the non-compact simple Lie groups $SO^*(2k)$ for $k\geqslant 3$, $SP(k,\mathbb{R})$ for $k\geqslant 2$, $SO_0(2,k)$ for $k\geqslant 3$, $SU(1,k)$ for $k\geqslant 3$, $SU(k_1,k_2)$ for $k_1,k_2\geqslant 2$, $e_6^{-14}$ and $e_7^{-25}$.
\end{theorem}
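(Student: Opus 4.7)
The strategy is to transport the infinite family of naturally reductive $m$-quasi-Einstein Lorentzian metrics from Theorem~\ref{main2} to the non-compact side via Cartan duality. For each non-compact simple Lie algebra $\mathfrak{g}$ in the list, write the Cartan decomposition $\mathfrak{g}=\mathfrak{k}\oplus\mathfrak{p}$, so that the compact dual is $\mathfrak{g}_u=\mathfrak{k}\oplus i\mathfrak{p}$. A case-by-case check shows the compact duals are $\mathfrak{so}(2k)$, $\mathfrak{sp}(k)$, $\mathfrak{so}(k+2)$, $\mathfrak{su}(k+1)$, $\mathfrak{su}(k_1+k_2)$, $\mathfrak{e}_6$ and $\mathfrak{e}_7$, all of which admit infinitely many $m$-quasi-Einstein Lorentzian metrics by Theorem~\ref{main2}; the lower bounds $k\geq 3$, $k_1,k_2\geq 2$, etc., are imposed precisely to avoid the exceptional compact duals $SU(3)$, $F_4$, $G_2$ and $E_8$. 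Moreover $\mathfrak{k}$ has one-dimensional center in every case, and by the arguments behind Theorem~\ref{main2} the associated Killing vector field $X$ may be taken to lie in $Z(\mathfrak{k})$.

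Given a metric of the form $\langle,\rangle_u=\alpha B_{\mathfrak{g}_u}|_{\mathfrak{k}_0}+\beta B_{\mathfrak{g}_u}|_{Z(\mathfrak{k})}+\gamma B_{\mathfrak{g}_u}|_{i\mathfrak{p}}$ on $\mathfrak{g}_u$ with $X\in Z(\mathfrak{k})$, I would define a companion left-invariant metric on $\mathfrak{g}$ by
\[
\langle,\rangle=\alpha B_\mathfrak{g}|_{\mathfrak{k}_0}+\beta B_\mathfrak{g}|_{Z(\mathfrak{k})}-\gamma B_\mathfrak{g}|_{\mathfrak{p}},
\]
where the sign reversal on $\mathfrak{p}$ compensates for $B_\mathfrak{g}(Y,Y')=-B_{\mathfrak{g}_u}(iY,iY')$ for $Y,Y'\in\mathfrak{p}$, so that numerical values match under the $\mathbb{C}$-linear identification $\mathfrak{g}\otimes\mathbb{C}\cong\mathfrak{g}_u\otimes\mathbb{C}$. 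For a suitable sign of $\beta$ the resulting metric has Lorentzian signature on $G$.

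The heart of the argument is to verify that $\operatorname{Ric}_X^m(\langle,\rangle)=\lambda'\langle,\rangle$ for a constant $\lambda'$ determined by $\lambda$ and the parameters. For naturally reductive metrics the Ricci curvature is given by an explicit formula in terms of the Killing form and structure constants. The bracket $[\mathfrak{p},\mathfrak{p}]\subset\mathfrak{k}$ on $\mathfrak{g}$ has the opposite sign to $[i\mathfrak{p},i\mathfrak{p}]\subset\mathfrak{k}$ on $\mathfrak{g}_u$, while the metric on $\mathfrak{p}$ also carries the opposite sign; these two sign flips cancel so that each term in the Ricci formula transforms cleanly. Since $X\in\mathfrak{k}$ and the metric on $\mathfrak{k}$ is unaltered, the Lie derivative $\mathfrak{L}_X\langle,\rangle$ and the one-form $X^\ast\otimes X^\ast$ are the direct analogues of those on $G_u$. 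Consequently the quasi-Einstein equation on $G_u$ forces the same equation on $G$, and the continuous parameter family supplied by Theorem~\ref{main2} yields infinitely many non-isometric Lorentzian $m$-quasi-Einstein metrics on each listed non-compact group.

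The main obstacle is the careful sign-bookkeeping in the Ricci computation: one must verify that all contributions---those coming from $[\mathfrak{k},\mathfrak{p}]$-brackets (where structure constants are unchanged), those from $[\mathfrak{p},\mathfrak{p}]$-brackets (where they flip), and the cross terms involving $X\in Z(\mathfrak{k})$---assemble to reproduce exactly the equation holding on the compact dual. Once this transfer lemma has been established, the theorem is an immediate consequence of the infinitude part of Theorem~\ref{main2}.
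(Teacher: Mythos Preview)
Your overall architecture---transfer via Cartan duality, with the quasi-Einstein equation passing through the complexification---is exactly the paper's strategy (Proposition~\ref{nonc}). The paper even argues the transfer by extending $\langle,\rangle$ to a $\mathbb{C}$-bilinear form on $\mathfrak{g}^{\mathbb{C}}$ and restricting to the dual real form, which is a slicker version of your sign-bookkeeping.

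However, there is a genuine gap in your choice of source family. You propose to transport the \emph{Lorentzian} metrics from Theorem~\ref{main2}. Those metrics (Section~4, case $a_1>0$, $a_0<0$, $a=1$) have their single negative direction on $\mathfrak{k}_0$. Under duality the sign on $\mathfrak{p}$ flips while $\mathfrak{k}$ is untouched, so on the non-compact side you get signature $(\dim\mathfrak{k}_1,\; 1+\dim\mathfrak{p})$, which is \emph{not} Lorentzian. Your sentence ``for a suitable sign of $\beta$ the resulting metric has Lorentzian signature'' cannot save this: $\beta$ is not a free parameter once the quasi-Einstein equation is imposed. The paper instead starts from the family $a_1<0$, $a_0>0$, $a=1$ on the compact side (Section~4, first paragraph after Theorem~\ref{2.1}'s setup), which has signature $(1+\dim\mathfrak{p},\dim\mathfrak{k}_1)$ there---not Lorentzian at all. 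After duality this becomes $(1,\dim\mathfrak{p}+\dim\mathfrak{k}_1)$, and a global sign change of the metric then yields the Lorentzian signature claimed in Theorem~\ref{main3}. The $\mathfrak{su}(k_1,k_2)$ case similarly uses $a_2<0$ rather than the Lorentzian family.

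A smaller point: your explanation of the lower bounds on $k$ is off. The restrictions are not about avoiding compact duals equal to $F_4$, $G_2$, $E_8$ (those groups have no symmetric subgroup with one-dimensional center, so they never appear in this construction). The bounds simply record the ranges in which the Hermitian symmetric pairs $(\mathfrak{g},\mathfrak{k})$ with $\dim\mathfrak{k}_0=1$ exist and $\mathfrak{g}$ is simple.
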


\section{The proof of Theorem~\ref{killingfield}}
Let $G$ be a compact Lie group with the Lie algebra ${\mathfrak g}$, let $B$ be the Killing form of $\mathfrak g$, and let $\langle,\rangle$ be a left-invariant metric on $G$, i.e., $$\langle \nabla_XY,Z\rangle+\langle Y,\nabla_XZ\rangle=0$$ for any $X,Y,Z\in{\mathfrak g}$. There is a linear map $D$ of $\mathfrak g$ satisfying $$\langle X,Y\rangle=(-B)(X,D(Y)),\forall X,Y\in {\mathfrak g}.$$ It follows that $D$ is symmetric with respect to $B$. Hence there exists a basis $\{X_1,\cdots,X_n\}$ of $\mathfrak g$ with respect to $-B$ such that $D(X_i)=\lambda_i(X_i)$. Moreover, $$\langle X_i,X_j\rangle=\delta_{ij}\lambda_i.$$ For any vector field $X$ on $G$, $X=\sum_{i=1}^nf_iX_i$, where every $f_i$ is a smooth function on $G$. With respect to the left-invariant $\langle,\rangle$,
\begin{eqnarray*}
   Ric^m_X(X_i,X_i)&=&Ric(X_i,X_i)+\langle \nabla_{X_i}X,X_i\rangle-\frac{1}{m}\langle X,X_i\rangle^2 \\
                   &=&Ric(X_i,X_i)+X_if_i+\sum_{j=1}^nf_j\langle \nabla_{X_i}X_j,X_i\rangle-\frac{1}{m}f_i^2\\
                   &=&Ric(X_i,X_i)+X_if_i+\sum_{j=1}^nf_j\langle [X_i,X_j],X_i\rangle-\frac{1}{m}f_i^2\\
                   &=&Ric(X_i,X_i)+X_if_i-\lambda_i\sum_{j=1}^nf_j B([X_i,X_j],X_i)-\frac{1}{m}f_i^2\\
                   &=&Ric(X_i,X_i)+X_if_i-\frac{1}{m}f_i^2.
\end{eqnarray*}
Furthermore in addition $Ric^m_X=\lambda\langle,\rangle$, i.e., $\langle,\rangle$ is $m$-quasi-Einstein. Then we have $X_if_i-\frac{1}{m}f_i^2=a_i$ for some constant $a_i$. %We claim that $f_i\equiv c_i$ for some constant $c_i$. That is, $X$ is left-invariant. In fact,
Let $x\in G$ such that $f_i(x)$ is maximal, we have $(X_if_i)(x)=0$. Clearly $f_i(xy)$ is maximal at the point $y=e\in G$, so we can assume that $x=e$ without loss of generalization. Hence $f_i^2(e)=-ma_i$. Let $a_i=-\frac{b_i^2}{m}$ for some $b_i\geqslant 0$. Then we have $f_i(e)=\pm b_i$. Assume that $x_1\in G$ satisfies that $f_i(x_1)$ is minimal. Similarly, $f_i(x_1)=\pm b_i$, then $|f_i(y)|\leqslant b_i$ for any $y\in G$. So if $f_i(e)=-b_i$, then $f_i\equiv -b_i$; if $f_i(x_1)=b_i$, then $f_i\equiv b_i$. In the following, assume that $f_i(e)=b_i$. Clearly $X_if_i\leqslant 0$. It follows that $f_i(\exp tX_i)\equiv b_i$ for $t\leqslant 0$. Let $T=\{\overline{\exp tX_i}|t\leqslant0\}$, which is equivalent with $\{\overline{\exp tX_i}|t\in{\mathbb R}\}$. Then $f_i(y)=b_i$ for any $y\in T$. For any $z\in G$, let $z_1\in zT$ be the point such that $f_i(z_1)$ is maximal on $zT$. Then $f_i(z_1\exp tX_i)$ is maximal at $t=0$. Similarly, $f_i(zT)=f_i(z_1T)\equiv b_i$. In particular, $f_i(z)=b_i$. Since $z$ is arbitrary, we have:
\begin{theorem}\label{leftinv}
Let $G$ be a compact Lie group with a left-invariant metric $\langle,\rangle$. If $X$ is a vector field on $G$ such that $Ric_X^m=\lambda\langle,\rangle$, i.e., $\langle,\rangle$ is $m$-quasi-Einstein, then $X$ is left-invariant.
\end{theorem}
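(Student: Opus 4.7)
The plan is to reduce the $m$-quasi-Einstein equation to a pointwise scalar equation for each coordinate of $X$ in a well-chosen basis, then to exploit the compactness of $G$ to force each coordinate to be a constant function on $G$.

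First, I would pick a basis $\{X_1,\dots,X_n\}$ of $\mathfrak{g}$ that simultaneously diagonalises $-B$ and $\langle,\rangle$; this is possible because the operator $D$ defined by $\langle X,Y\rangle=-B(X,D(Y))$ is $B$-symmetric. Writing $X=\sum_i f_iX_i$, the quasi-Einstein identity reduces to scalar equations by pairing both sides with $X_i\otimes X_i$. In computing $Ric^m_X(X_i,X_i)$, the cross-terms $\sum_j f_j\langle[X_i,X_j],X_i\rangle$ drop out because $\langle[X_i,X_j],X_i\rangle$ is proportional to $B([X_i,X_j],X_i)$, which vanishes by ad-invariance of the Killing form, and $Ric(X_i,X_i)$ is a constant function on $G$ by left-invariance of the metric. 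The equation collapses to
\[
X_if_i-\frac{1}{m}f_i^2=a_i
\]
for some real constant $a_i$, one equation per index $i$. This is the structural reduction on which everything hangs.

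Secondly, I would extract information from compactness. At a point $x_0$ where $f_i$ attains its maximum, the directional derivative $(X_if_i)(x_0)$ vanishes, so $a_i=-f_i(x_0)^2/m\leq 0$; writing $a_i=-b_i^2/m$ with $b_i\geq 0$, the same observation at a minimum gives $f_i(x_1)=\pm b_i$, so $|f_i|\leq b_i$ on $G$. Substituting back yields $X_if_i=(f_i^2-b_i^2)/m\leq 0$, i.e., $f_i$ is non-increasing along every integral curve of the left-invariant field $X_i$, that is, along each curve $t\mapsto g\exp(tX_i)$.

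Thirdly, I would bootstrap non-increasingness into constancy. After a left-translation one may assume $f_i$ is maximal at $e$ with value $b_i$; then non-increasingness combined with $b_i$ being the global maximum forces $f_i\equiv b_i$ on $\{\exp(tX_i):t\leq 0\}$, hence on its closure, the torus $T$ generated by $X_i$. For an arbitrary $z\in G$, picking $z_1\in zT$ where $f_i$ attains its maximum on the compact coset $zT$ and running the same argument along $t\mapsto z_1\exp(tX_i)\in zT$ shows that $f_i$ is constant on $zT$ with value in $\{b_i,-b_i\}$. Smoothness of $f_i$ together with connectedness of $G$ then forces a single sign throughout, so $f_i\equiv b_i$ globally. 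Hence every coordinate $f_i$ is constant and $X$ is left-invariant. The main obstacle I expect is this last step, propagating constancy from $T$ to all of $G$ via maxima on the cosets $zT$; the earlier reductions are either routine linear algebra or straightforward maximum-principle consequences.
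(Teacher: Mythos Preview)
Your proposal is correct and follows essentially the same route as the paper: the same diagonalising basis for $-B$ and $\langle,\rangle$, the same reduction to the scalar equation $X_if_i-\tfrac{1}{m}f_i^2=a_i$ via the vanishing of $B([X_i,X_j],X_i)$, the same max/min argument giving $|f_i|\le b_i$ and $X_if_i\le 0$, and the same propagation from the torus $T=\overline{\{\exp tX_i\}}$ to all cosets $zT$ by choosing a maximiser $z_1\in zT$. Your final step is in fact slightly more carefully stated than the paper's: the paper asserts directly that $f_i(zT)\equiv b_i$, whereas you correctly note that the maximum value on $zT$ is a priori only $\pm b_i$ and then invoke connectedness of $G$ to force a single value globally.
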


Considering a Lie group $G$ as a special homogeneous manifold, we have the following facts and results. Let $M$ denote the set of left-invariant metrics on $G$. For any left-invariant metric $Q$ on $G$, the tangent space $T_QM$ at $Q$ is left-invariant symmetric, bilinear forms on $\mathfrak g$. Define a Riemannian metric on $M$ by
$$(v,w)_Q=\mathrm{tr}\ vw=\sum_iv(e_i, e_i)w(e_i, e_i),$$
where $v,w\in T_QM$ and $\{e_i\}$ is a $Q$-orthonormal basis of $\mathfrak g$.

\begin{lemma}\label{sc}
Let $G$ be a unimodular Lie group with a left-invariant metric $\langle,\rangle$. Given $Q\in M$, denote by $ric_Q$
and $sc_Q$ the Ricci and scalar curvatures of $(G,Q)$, respectively. The
gradient of the function $sc: M\rightarrow{\mathbb R}$ is
$(\mathrm{grad}\ sc)_Q =-ric_Q$
relative to the above Riemannian metric on $M$.
\end{lemma}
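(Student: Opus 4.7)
The plan is to linearize the scalar curvature along a straight-line family of left-invariant metrics and use the unimodularity hypothesis to discard all but the Ricci term in the standard first-variation formula. Fix $Q\in M$ and a tangent vector $v\in T_QM$ (a left-invariant symmetric bilinear form on $\mathfrak{g}$), and consider the family $Q_t:=Q+tv$, which is positive definite for sufficiently small $|t|$. Because each $Q_t$ is left-invariant, $sc_{Q_t}$ is a constant function on $G$, so it suffices to compute $\frac{d}{dt}\big|_{t=0}sc_{Q_t}$ pointwise at the identity.

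I would invoke the classical pointwise variational identity
$$\frac{d}{dt}\Big|_{t=0} sc_{Q_t} = \Delta_Q(\mathrm{tr}_Q v) + \nabla^i\nabla^j v_{ij} - \langle \mathrm{Ric}_Q, v\rangle_Q,$$
where the last pairing is the natural inner product on symmetric $(0,2)$-tensors. Since $v$ is left-invariant, $\mathrm{tr}_Q v$ is a constant function on $G$, so the Laplacian term is identically zero. Identifying $v,w\in T_QM$ with their $Q$-associated endomorphisms $v^{\sharp},w^{\sharp}$ turns the inner product defined on $M$ into $(v,w)_Q=\mathrm{tr}(v^{\sharp}w^{\sharp})=\langle v,w\rangle_Q$, so the Ricci term is precisely $-(ric_Q,v)_Q$.

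The remaining work is to show that the double divergence $\delta_Q^2 v:=\nabla^i\nabla^j v_{ij}$ also vanishes. In a left-invariant orthonormal frame $\{e_i\}$, Koszul's formula reduces the covariant derivative of $v$ to Lie-bracket data; after collecting terms, every summand contributing to $\delta_Q^2 v$ acquires a factor of the shape $\sum_i Q([e_i,Y],e_i) = -\mathrm{tr}(\mathrm{ad}\,Y)$ for some $Y\in\mathfrak g$. The unimodularity of $G$ — i.e., $\mathrm{tr}(\mathrm{ad}\,Y)=0$ for every $Y$ — kills the double divergence pointwise. Combined with the previous step, this yields $\frac{d}{dt}\big|_{t=0}sc_{Q_t}=-(ric_Q,v)_Q$ for every $v\in T_QM$, which is the defining property of $(\mathrm{grad}\,sc)_Q=-ric_Q$.

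The main obstacle is the last paragraph: the vanishing of $\delta_Q^2 v$ is not visible at a glance and requires careful Lie-algebraic bookkeeping to confirm that unimodularity — rather than any stronger compactness assumption — is exactly what is needed. All other steps are essentially routine consequences of the first-variation formula and of the fact that left-invariant tensors on a Lie group produce constant functions once traced against a left-invariant metric.
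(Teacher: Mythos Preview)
The paper does not supply its own proof of this lemma; immediately after stating it, the authors write ``For a proof of this result for homogeneous manifolds, see \cite{He1} or \cite{Ni1}''. Your argument via the first-variation formula is correct and is essentially the standard route underlying those references. One small sharpening of your last paragraph: rather than tracking individual summands in the double divergence, observe that $\delta_Q v$ is itself a left-invariant $1$-form (since $v$, $Q$, and hence $\nabla^Q$ are all left-invariant), and that the divergence of any left-invariant $1$-form $\alpha$ on $G$ equals $-\mathrm{tr}\,\mathrm{ad}\big(\alpha^\sharp\big)$, which vanishes exactly by unimodularity. This confirms, with no further bookkeeping, that unimodularity is precisely the hypothesis needed.
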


For a proof of this result for homogeneous manifolds, see \cite{He1} or \cite{Ni1}. Assume that $\langle,\rangle$ is $m$-quasi-Einstein. That is, there exists a vector field $X$ on a unimodular Lie group $G$ such that $Ric+\frac{1}{2}{\mathfrak L}_X\langle,\rangle-\frac{1}{m}X^*\otimes X^*=\lambda\langle,\rangle$. In addition, assume that $X$ is a left-invariant vector field, i.e., $X\in {\mathfrak g}$. Since $\langle,\rangle$ is a left-invariant metric, for a orthonormal basis relative to $\langle,\rangle$, we have
\begin{equation}\label{killing}
  Ric=\lambda \mathrm{Id}-\frac{1}{2}[(\mathrm{ad} X)+(\mathrm{ad} X)^t]+\frac{|X|^2}{m}\mathrm{Pr}|_X.
\end{equation}
Here $\mathrm{Pr}|_X$ is the projection of a vector to $X$. As scalar curvature is a Riemannian invariant, $sc(\langle,\rangle)=sc(\phi^*_t\langle,\rangle)$, where $\phi_t=\exp t\mathrm{ad}X$ is a 1-parameter subgroup of $Aut(G)$. By Lemma~\ref{sc}, the equation~(\ref{killing}), and the fact that $G$ is unimodular.
\begin{eqnarray*}
  0&=&\frac{d}{dt}\mid_{t=0}sc(\phi^*_t\langle,\rangle)=(\mathrm{grad}\ sc,\mathrm{ad} X)_{\langle,\rangle} \\
              &=&-\lambda\mathrm{tr}\ \mathrm{ad}X+\mathrm{tr}\ \frac{1}{2}[(\mathrm{ad} X)+(\mathrm{ad} X)^t](\mathrm{ad}\ X)-\frac{|X|^2}{m}\mathrm{tr}\ (\mathrm{Pr}|_X) (\mathrm{ad}\ X)\\
             &=&-\lambda\mathrm{tr}\ \mathrm{ad}X+\mathrm{tr}\ \frac{1}{4}[(\mathrm{ad} X)+(\mathrm{ad} X)^t]^2-\frac{|X|^2}{m}\mathrm{tr}\ (\mathrm{ad}\ X)(\mathrm{Pr}|_X),\\
             &=&\mathrm{tr}\ \frac{1}{4}[(\mathrm{ad} X)+(\mathrm{ad} X)^t]^2.
\end{eqnarray*}
Here $\langle \mathrm{ad}X(Y),Z \rangle=\langle Y, (\mathrm{ad}X)^t(Z) \rangle$ for any $X,Y,Z\in{\mathfrak g}$. Thus we have $\frac{1}{2}[(\mathrm{ad} X)+(\mathrm{ad} X)^t]=0$ and hence $X$ is a Killing vector field. Then we have
\begin{theorem}\label{killingfield1}
Let $G$ be a unimodular Lie group with a left-invariant metric $\langle,\rangle$. If $X$ is a left-invariant vector field on $G$ such that $\langle,\rangle$ is $m$-quasi-Einstein, i.e., $Ric^m_X=\lambda\langle,\rangle$, then $X$ is a Killing vector field.
\end{theorem}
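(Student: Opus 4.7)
The plan is to exploit the variational characterization of Ricci curvature given in Lemma~\ref{sc}, applied to the one-parameter flow of automorphisms of $G$ generated by $X$ itself. Since $X$ is left-invariant, the equation $Ric^m_X=\lambda\langle,\rangle$ descends to an identity between left-invariant symmetric $(0,2)$-tensors on $\mathfrak{g}$; in a $\langle,\rangle$-orthonormal frame the Lie derivative $\frac{1}{2}\mathcal{L}_X\langle,\rangle$ becomes the symmetric part $\frac{1}{2}[\mathrm{ad}X+(\mathrm{ad}X)^t]$, and the rank-one correction $\frac{1}{m}X^*\otimes X^*$ becomes $\frac{|X|^2}{m}\mathrm{Pr}|_X$. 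Solving for the Ricci operator yields exactly equation~(\ref{killing}). The key observation driving the strategy is that the Killing condition on $X$ is the vanishing of the symmetric operator $\mathrm{ad}X+(\mathrm{ad}X)^t$, so the task reduces to extracting a non-negative quantity proportional to the trace of its square.

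To do so, I would pair the Ricci operator against the tangent vector induced on $M$ by the inner automorphism flow $\phi_t=\exp(t\,\mathrm{ad}X)$. Each $\phi_t$ is an isometry from $(G,\phi_t^*\langle,\rangle)$ to $(G,\langle,\rangle)$, so $sc(\phi_t^*\langle,\rangle)$ is independent of $t$, and differentiating at $t=0$ combined with Lemma~\ref{sc} gives $(\mathrm{grad}\,sc,\mathrm{ad}X)_{\langle,\rangle}=-\mathrm{tr}(Ric\cdot\mathrm{ad}X)=0$. I would then substitute the three pieces of~(\ref{killing}) into this trace identity and analyze them separately: the $\lambda\,\mathrm{Id}$ piece contributes $-\lambda\,\mathrm{tr}\,\mathrm{ad}X$, which vanishes by unimodularity of $G$; the rank-one piece reduces to a scalar multiple of $\langle[X,X],X\rangle=0$; and the symmetric-part piece, after using the elementary identity $\mathrm{tr}(A^2)=\mathrm{tr}((A^t)^2)$ to rewrite $\mathrm{tr}([\mathrm{ad}X+(\mathrm{ad}X)^t]\mathrm{ad}X)$, becomes $\frac{1}{4}\mathrm{tr}[\mathrm{ad}X+(\mathrm{ad}X)^t]^2$.

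The final step is then immediate: since $\mathrm{ad}X+(\mathrm{ad}X)^t$ is self-adjoint with respect to $\langle,\rangle$, the trace of its square is its squared Hilbert--Schmidt norm, hence vanishes only when the operator itself is zero, which is exactly the Killing condition on $X$. I expect the main subtlety to be conceptual rather than computational: recognizing that the correct probe of the quasi-Einstein equation is the automorphism flow generated by $X$ itself, since only that particular choice allows unimodularity and the vacuous identity $[X,X]=0$ to simultaneously kill every term except the desired non-negative Hilbert--Schmidt square. A generic symmetric perturbation of the metric would not produce any comparable cancellation, and without unimodularity the $\lambda\,\mathrm{Id}$ term would obstruct the argument entirely.
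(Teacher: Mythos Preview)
Your proposal is correct and follows essentially the same argument as the paper: both derive equation~(\ref{killing}), differentiate the scalar curvature along the automorphism flow $\phi_t=\exp(t\,\mathrm{ad}X)$ using Lemma~\ref{sc}, and then eliminate the $\lambda\,\mathrm{Id}$ term via unimodularity and the rank-one term via $[X,X]=0$, leaving only $\tfrac{1}{4}\mathrm{tr}[(\mathrm{ad}X)+(\mathrm{ad}X)^t]^2=0$. Your write-up is in fact slightly more explicit than the paper's about why the rank-one term vanishes and about the identity $\mathrm{tr}(A^2)=\mathrm{tr}((A^t)^2)$ used to symmetrize the middle term, but the route is identical.
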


\begin{remark}
Jablonski proved in \cite{Jab1} that compact homogeneous Ricci solitons are necessarily Einstein. In essential, it is equivalent that the vector field is a Killing vector field. The proof there for Ricci solitons inspired the proof here for $m$ finite on unimodular Lie groups.
\end{remark}

Clearly, a compact Lie group is unimodular. By Theorems~\ref{leftinv} and \ref{killingfield1}, we have Theorem~\ref{killingfield}.

\section{$m$-quasi-Einstein metrics on compact simple Lie groups}
Let $G$ be a compact Lie group with Lie algebra ${\mathfrak g}$ and $g$ a bi-invariant metric on $G$. Let $K$ be a connected subgroup of $G$ with Lie algebra ${\mathfrak k}$. Thus ${\mathfrak k}$ is compact and splits into center and simple ideals: $${\mathfrak k}={\mathfrak k}_0\oplus{\mathfrak k}_1\oplus\cdots\oplus{\mathfrak k}_r,$$ where ${\mathfrak k}_0$ is the center of ${\mathfrak k}$. Clearly, this decomposition is orthogonal with respect to $g$. Let ${\mathfrak p}={\mathfrak k}^\bot$ with respect to $g$.
\begin{theorem}[\cite{DZ1}]
For any compact Lie group $G$ with a bi-invariant metric $g$ and a connected subgroup $K$, the following left-invariant metrics
\begin{equation}\label{NatRed}\langle,\rangle=ag|_{\mathfrak p}+h|_{\mathfrak k_0}+a_1g|_{{\mathfrak k}_1}+\cdots+a_rg|_{{\mathfrak k}_r}\end{equation}
are naturally reductive with respect to $G\times K$. Here ${\mathfrak k}={\mathfrak k}_0\oplus{\mathfrak k}_1\oplus\cdots\oplus{\mathfrak k}_r$, ${\mathfrak k}_0$ is the center of ${\mathfrak k}$, ${\mathfrak k}_1, \cdots,{\mathfrak k}_r$ are simple ideals of $\mathfrak k$, $a,a_1,\cdots,a_r$ are positive, and $h$ is an arbitrary metric on $\mathfrak k_0$.
\end{theorem}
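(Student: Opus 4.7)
The plan is to realize $G$ as the reductive homogeneous space $\widetilde G/\widetilde K$, where $\widetilde G=G\times K$ acts on $G$ by $(g_1,k)\cdot x=g_1xk^{-1}$ and the isotropy at the identity is the diagonal $\widetilde K=\Delta K=\{(k,k):k\in K\}$. At the Lie algebra level $\widetilde{\mathfrak g}={\mathfrak g}\oplus{\mathfrak k}$ and $\widetilde{\mathfrak k}=\Delta{\mathfrak k}$. The strategy is to construct a bi-invariant (i.e.\ $\mathrm{ad}(\widetilde{\mathfrak g})$-invariant) symmetric bilinear form $Q$ on $\widetilde{\mathfrak g}$ that is non-degenerate on $\widetilde{\mathfrak k}$, set ${\mathfrak m}=\widetilde{\mathfrak k}^{\perp_Q}$, and appeal to the standard fact that the restriction of a bi-invariant form to such a $Q$-orthogonal complement of the isotropy yields a naturally reductive invariant metric on $\widetilde G/\widetilde K$. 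It then remains to match, under the natural isomorphism $\pi\colon{\mathfrak m}\to{\mathfrak g}$, $(X,Y)\mapsto X-Y$, the pullback of $Q|_{\mathfrak m}$ with the given left-invariant metric $\langle,\rangle$.

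The form $Q$ is built blockwise. Since $\widetilde{\mathfrak g}$ splits as the direct sum of ideals ${\mathfrak g}\oplus{\mathfrak k}$, every bi-invariant symmetric form on $\widetilde{\mathfrak g}$ is a direct sum of bi-invariant forms on each summand. Take $Q|_{\mathfrak g}=\alpha\,g|_{\mathfrak g}$ for a scalar $\alpha$; on ${\mathfrak k}$ use the classification of invariant forms on a compact Lie algebra to write $Q|_{\mathfrak k}=q_0\oplus\beta_1\,g|_{{\mathfrak k}_1}\oplus\cdots\oplus\beta_r\,g|_{{\mathfrak k}_r}$, where $q_0$ is an arbitrary symmetric bilinear form on the center ${\mathfrak k}_0$ and $\beta_1,\ldots,\beta_r\in\mathbb R$. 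A direct computation of $\widetilde{\mathfrak k}^{\perp_Q}$ and of the pullback of $Q|_{\mathfrak m}$, done separately on each of the $\mathrm{Ad}(\widetilde K)$-invariant pieces ${\mathfrak p}$, ${\mathfrak k}_0$ and ${\mathfrak k}_i$ for $i\geqslant 1$, yields a metric on ${\mathfrak g}$ of the shape $\alpha\,g|_{\mathfrak p}$ on ${\mathfrak p}$, a scalar multiple (depending only on $\alpha$ and $\beta_i$) of $g|_{{\mathfrak k}_i}$ on each simple ideal, and a form on ${\mathfrak k}_0$ built from $\alpha\,g|_{{\mathfrak k}_0}$ and $q_0$. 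Setting $\alpha=a$ and solving the resulting scalar equations for $\beta_i$ in terms of $(a,a_i)$, and the matrix equation for $q_0$ in terms of $(a,h)$, determines $Q$ and hence the claimed naturally reductive structure.

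The main obstacle is the exceptional case $a_i=a$, in which the equation for $\beta_i$ becomes degenerate. I would handle it by a perturbation argument: apply the construction to $(a_1,\ldots,a_i+\varepsilon,\ldots,a_r)$ for small $\varepsilon\neq 0$, obtaining a family of naturally reductive metrics on $G$ with respect to the same pair $(\widetilde G,\widetilde K)$, and take the limit $\varepsilon\to 0$; the naturally reductive condition $\langle[X,Y]_{\mathfrak m},Z\rangle+\langle Y,[X,Z]_{\mathfrak m}\rangle=0$ on $\mathrm{Ad}(\widetilde K)$-invariant inner products on ${\mathfrak m}$ is closed, so it persists in the limit. An alternative is to drop the factor of $K$ corresponding to ${\mathfrak k}_i$ from $\widetilde G$ whenever $a_i=a$, since the restriction of $\langle,\rangle$ to ${\mathfrak p}\oplus{\mathfrak k}_i$ is then a scalar multiple of the bi-invariant metric $g$ and needs no right action to symmetrize. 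Positivity of $Q|_{\mathfrak m}$ is automatic from the identification with the positive definite $\langle,\rangle$.
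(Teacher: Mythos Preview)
The paper does not prove this statement at all; it is quoted verbatim from D'Atri--Ziller \cite{DZ1} and used as a black box, so there is no in-paper argument to compare against. That said, your sketch is precisely the classical D'Atri--Ziller construction: realize $G$ as $(G\times K)/\Delta K$, take an $\mathrm{ad}$-invariant form $Q=\alpha\,g\oplus(q_0\oplus\beta_1 g|_{{\mathfrak k}_1}\oplus\cdots\oplus\beta_r g|_{{\mathfrak k}_r})$ on ${\mathfrak g}\oplus{\mathfrak k}$, and read off the metric from $Q|_{{\mathfrak m}}$ with ${\mathfrak m}=(\Delta{\mathfrak k})^{\perp_Q}$. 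The scalar equation you allude to on each simple factor is $a_i=\dfrac{\alpha\beta_i}{\alpha+\beta_i}$ with $\alpha=a$, giving $\beta_i=\dfrac{a\,a_i}{a-a_i}$, and similarly for $q_0$; this is correct.

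One caveat on the exceptional case $a_i=a$: your perturbation argument is morally right but, as written, is slightly imprecise because the reductive complement ${\mathfrak m}_\varepsilon$ itself moves with $\varepsilon$, so the condition $\langle[X,Y]_{\mathfrak m},Z\rangle+\langle Y,[X,Z]_{\mathfrak m}\rangle=0$ is not literally a closed condition on metrics for a \emph{fixed} ${\mathfrak m}$. What saves you is that the ${\mathfrak m}_\varepsilon$ converge in the Grassmannian (the ${\mathfrak k}_i$-piece $\{(X,-\tfrac{\alpha}{\beta_i(\varepsilon)}X)\}$ tends to ${\mathfrak k}_i\times\{0\}$ as $\beta_i\to\infty$), the limit is still an $\mathrm{Ad}(\Delta K)$-invariant complement of $\Delta{\mathfrak k}$, and after parametrizing ${\mathfrak m}_\varepsilon$ continuously by ${\mathfrak g}$ via $\pi$ the naturally reductive identity passes to the limit. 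Your alternative of dropping the factor corresponding to ${\mathfrak k}_i$ gives natural reductivity with respect to $G\times K'$ for a smaller $K'$, not literally $G\times K$ as the theorem asserts, so the limiting argument (or a direct verification with the degenerate complement ${\mathfrak k}_i\times\{0\}$) is the cleaner route.
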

Furthermore, let $G$ be a compact simple Lie group.
\begin{theorem}[\cite{DZ1}]
Any left-invariant metric on a compact simple Lie group which is naturally reductive is of the form~(\ref{NatRed}).
\end{theorem}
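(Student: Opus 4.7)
The plan is to leverage the fact that a naturally reductive left-invariant metric on $G$ arises from a bi-invariant symmetric bilinear form on the Lie algebra of a larger transitive isometry group, and then use the simplicity of $\mathfrak{g}$ to classify such forms.

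First, I would unpack the hypothesis. If $\langle,\rangle$ is naturally reductive, then by definition there is a connected Lie group $L$ acting transitively and isometrically on $(G,\langle,\rangle)$ with $G \subseteq L$ acting by left translations, together with a reductive decomposition $\mathfrak{l} = \mathfrak{h} \oplus \mathfrak{m}$ of the Lie algebra of $L$ (where $\mathfrak{h}$ is the isotropy at $e$) such that $\langle [X,Y]_\mathfrak{m}, Z\rangle + \langle Y, [X,Z]_\mathfrak{m}\rangle = 0$ for all $X,Y,Z \in \mathfrak{m}$. Since $G$ is compact I may take $L$ compact. Because $G$ itself acts simply transitively, the isotropy meets $G$ trivially, and tracing through the action realizes $\mathfrak{l}$ as a subalgebra of $\mathfrak{g} \oplus \mathfrak{g}$ containing $\mathfrak{g} \oplus 0$. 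This forces $\mathfrak{l} = \mathfrak{g} \oplus \mathfrak{k}$ for some compact subalgebra $\mathfrak{k} \subseteq \mathfrak{g}$, with isotropy embedded diagonally as $\mathfrak{h} = \{(X,X) : X \in \mathfrak{k}\}$, and one may take the reductive complement to be $\mathfrak{m} = \mathfrak{p} \oplus \{(X,-X) : X \in \mathfrak{k}\}$, where $\mathfrak{p} = \mathfrak{k}^\perp$ in $\mathfrak{g}$ with respect to $-B$.

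Second, I would use the naturally reductive identity to extend $\langle,\rangle|_\mathfrak{m}$ to a bi-invariant symmetric bilinear form $Q$ on all of $\mathfrak{l} = \mathfrak{g} \oplus \mathfrak{k}$: declare $Q$ to equal $\langle,\rangle$ on $\mathfrak{m}$ and to be any $\mathrm{Ad}(K)$-invariant inner product on $\mathfrak{h}$, and check that the naturally reductive identity yields $\mathrm{ad}(\mathfrak{l})$-invariance. A bi-invariant form on the reductive compact algebra $\mathfrak{g} \oplus \mathfrak{k}$ decomposes along ideals: the cross-pairings between the two factors vanish since $\mathrm{ad}(\mathfrak{g})$ acts trivially on $\mathfrak{k}$; $Q|_\mathfrak{g}$ is a positive multiple $ag$ of $-B$ by simplicity of $\mathfrak{g}$; and $Q|_\mathfrak{k}$ splits as a positive multiple $a_i g$ on each simple ideal $\mathfrak{k}_i$, together with an arbitrary positive definite form $h$ on the center $\mathfrak{k}_0$.

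Third, I translate back to $\mathfrak{g}$ via the linear isomorphism $\mathfrak{m} \to \mathfrak{g}$ given by projection onto the first factor. A short direct calculation using the shape of $Q$ on $\mathfrak{m}$ recovers the left-invariant metric on $G$ in the form $a g|_\mathfrak{p} + h|_{\mathfrak{k}_0} + a_1 g|_{\mathfrak{k}_1} + \cdots + a_r g|_{\mathfrak{k}_r}$ (after absorbing harmless numerical factors into the $a_i$ and into $h$), which is precisely~(\ref{NatRed}). The main obstacle is the reduction in the first step, namely forcing $\mathfrak{l}$ into the standard shape $\mathfrak{g} \oplus \mathfrak{k}$: the definition of naturally reductive permits any transitive isometry group, and one must carefully exploit the simple transitivity of $G$ and verify that a reductive complement can be chosen compatibly with the resulting product structure.
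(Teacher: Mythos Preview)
The paper does not prove this theorem; it is quoted verbatim from D'Atri--Ziller \cite{DZ1} as background, with no argument supplied. So there is nothing in the present paper to compare your proposal against.

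That said, your outline does follow the strategy of \cite{DZ1}: identify the transitive isometry group as $G\times K$ for some closed connected subgroup $K\subset G$, pass to an $\mathrm{ad}$-invariant bilinear form on $\mathfrak g\oplus\mathfrak k$, and read off the block decomposition. One point in your second step needs correction. You cannot declare $Q$ to be an \emph{arbitrary} $\mathrm{Ad}(K)$-invariant inner product on $\mathfrak h$ and then expect $\mathrm{ad}(\mathfrak l)$-invariance to follow from the naturally reductive identity alone; the identity only controls brackets with one leg in $\mathfrak m$. The tool actually used in \cite{DZ1} is Kostant's theorem: a naturally reductive structure on $\mathfrak l=\mathfrak h\oplus\mathfrak m$ determines a \emph{unique} $\mathrm{ad}(\mathfrak l)$-invariant symmetric bilinear form $Q$ on $\mathfrak l$ with $Q|_{\mathfrak m}=\langle,\rangle$ and $Q(\mathfrak h,\mathfrak m)=0$; the restriction $Q|_{\mathfrak h}$ is forced, not chosen, and need not be positive definite. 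With Kostant's theorem in place, your third step goes through and recovers the form~(\ref{NatRed}). You correctly flag the first step (forcing $\mathfrak l\cong\mathfrak g\oplus\mathfrak k$) as the delicate part; in \cite{DZ1} this is handled by showing that the identity component of the isometry group of any left-invariant metric on a compact simple $G$ lies in $L(G)\cdot R(G)$.
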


Without loss of generalization, assume that $V=\{X\in{\mathfrak p}|[X,\mathfrak k]=0\}=0$, otherwise we can enlarge the group $K$ such that $V=0$.

Assume that $G$ is a compact simple Lie group and $\dim {\mathfrak k}_0\leqslant 1$. Let $B,B_i$ be the Killing form of ${\mathfrak g}$, ${\mathfrak k}_i$. Then $h|_{\mathfrak k_0}=a_0(-B)|_{\mathfrak k_0}$, where $a_0>0$ if $\dim {\mathfrak k}_0=1$ and $a_0>0$ if $\dim {\mathfrak k}_0=0$. Since every ${\mathfrak k}_i$ for $1\leqslant i\leqslant r$ is simple, we have $B_i=c_iB$ with $c_i>0$ while $B_j=c_jB$ with $c_j=0$ for $j=0$.

Let $\pi$, $\pi_i$ be the projections of $\mathfrak g$ on $\mathfrak p$, $\mathfrak k_i$ respectively.
In order to describe the Ricci curvature, define the following symmetrical bilinear forms $A_i$, $i=0,1,\cdots, r$ and $T$ on $\mathfrak p$ as in \cite{DZ1} and \cite{Jen1}:
$$A_i(X,Y)=\mathrm{Tr}_{\mathfrak p}(\mathrm{ad} X)(\pi_i \mathrm{ad} Y),\ T(X,Y)=\mathrm{Tr}_{\mathfrak p}(\pi\mathrm{ad} X)(\pi \mathrm{ad} Y).$$
Here $\mathrm{Tr}_{\mathfrak p}$ denotes the trace of the mapping restricted on $\mathfrak p$, the others are similar. Moreover, it is shown in \cite{DZ1} that
$$A_i(X,Y)=\mathrm{Tr}_{\mathfrak p}(\mathrm{ad} X)(\pi_i \mathrm{ad} Y)=\mathrm{Tr}_{\mathfrak k_i}(\pi_i \mathrm{ad} Y)(\mathrm{ad} X).$$
It is easy to see that $A_i$ and $T$ are negative semidefinite and $Ad (K)$ invariant. Furthermore it is well known (see \cite{DZ1} and \cite{Jen1}) that
$$B|_{\mathfrak p}=T+2\sum_{i=0}^rA_i.$$
For any $X\in {\mathfrak g}$, define $A_XY=-\nabla_YX$, we have a simple formula for the Ricci curvature of a left-invariant metric (see \cite{DZ1} and \cite{Sa1}):
\begin{equation}\label{Ric}
Ric(Y,Z)=-\mathrm{Tr} A_ZA_Y, \forall Y,Z\in {\mathfrak g}.
\end{equation}
The Ricci curvature is determined by the equation~(\ref{Ric}).
\begin{lemma}[\cite{DZ1}]\label{Ricci}
Let $G$ be a compact simple Lie group $G$ with the bi-invariant metric $g=-B$, let $K$ be a connected subgroup of $G$ with $\dim {\mathfrak k}_0\leqslant 1$, and let
\begin{equation}\label{NatRed1}\langle,\rangle=ag|_{\mathfrak p}+a_0g|_{\mathfrak k_0}+a_1g|_{{\mathfrak k}_1}+\cdots+a_rg|_{{\mathfrak k}_r}\end{equation}
be a naturally reductive metric on $G$ with respect to $G\times K$. Then the Levi-Civita connection with respect to $\langle,\rangle$ is given by
\[ \left\{ \begin{aligned}
   & \nabla_XY=\frac{1}{2}[X,Y] \text{ for any } X,Y\in {\mathfrak k} \text{ or } X,Y\in {\mathfrak p}, \\
   & \nabla_XY=\frac{a_i}{2a}[X,Y] \text{ for any } X\in {\mathfrak p} \text{ and } Y\in {\mathfrak k_i}, \\
   & \nabla_XY=(1-\frac{a_i}{2a})[X,Y] \text{ for any } X\in {\mathfrak k_i} \text{ and } Y\in {\mathfrak p}.
\end{aligned}\right.\]
Furthermore, the Ricci curvature with respect to $\langle,\rangle$ is given by
\[ \left\{ \begin{aligned}
  & Ric|_{\mathfrak k_j}=-\frac{1}{4a^2}(a^2c_j-a_j^2c_j+a_j^2)B|_{\mathfrak k_j},\quad 0\leqslant j \leqslant r,\\
  & Ric|_{\mathfrak p}=\frac{1}{2}\sum_{i=0}^r(\frac{a_i}{a}-1)A_i-\frac{1}{4}B|_{\mathfrak p}, \\
  & Ric({\mathfrak p},{\mathfrak k}_i)=Ric({\mathfrak k}_i,{\mathfrak k}_j)=0,\quad 0\leqslant i\not=j \leqslant r.
\end{aligned}\right.\]
\end{lemma}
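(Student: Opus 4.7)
The plan is to read off the connection from the Koszul identity and then substitute into $Ric(Y,Z)=-\mathrm{Tr}(A_ZA_Y)$ with $A_X=-\nabla_{(\cdot)}X$. The structural inputs I rely on throughout are: $g=-B$ is bi-invariant, yielding the three-cyclic identity $g([X,Y],Z)=g([Y,Z],X)=g([Z,X],Y)$ and the inclusion $[\mathfrak k,\mathfrak p]\subset\mathfrak p$; the ideals $\mathfrak k_0,\mathfrak k_1,\ldots,\mathfrak k_r$ of $\mathfrak k$ commute pairwise; each $\mathfrak k_j$ for $j\ge 1$ has its own Killing form satisfying $B_j=c_jB|_{\mathfrak k_j}$; and $\langle,\rangle$ is a positive scalar multiple of $g$ on each of $\mathfrak p,\mathfrak k_0,\ldots,\mathfrak k_r$.

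First I apply the Koszul formula
\[2\langle\nabla_XY,Z\rangle=\langle[X,Y],Z\rangle-\langle[Y,Z],X\rangle+\langle[Z,X],Y\rangle\]
and perform a case analysis on where $X,Y,Z$ sit. When $X,Y$ lie in a common summand (both in $\mathfrak p$, or both in a single $\mathfrak k_i$), a direct check using the three-cyclic identity collapses the right-hand side to $\langle[X,Y],Z\rangle$ for every $Z$, giving $\nabla_XY=\tfrac12[X,Y]$. When $X\in\mathfrak p$ and $Y\in\mathfrak k_i$, the only source of asymmetry is that the third Koszul term, which pairs with $Y\in\mathfrak k_i$, scales by $a_i$ while the other two scale by $a$; the three-cyclic collapse then yields $\nabla_XY=\tfrac{a_i}{2a}[X,Y]$, and the remaining case $X\in\mathfrak k_i$, $Y\in\mathfrak p$ follows from torsion-freeness $\nabla_XY=\nabla_YX+[X,Y]$.

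Next I convert the connection into the endomorphisms $A_X=-\nabla_{(\cdot)}X$ and substitute into $Ric(Y,Z)=-\mathrm{Tr}(A_ZA_Y)$. For $X\in\mathfrak k$, $A_X$ preserves the decomposition $\mathfrak g=\mathfrak p\oplus\bigoplus_j\mathfrak k_j$, so each trace factors over the pieces. For $Y,Z\in\mathfrak k_j$ with $j\ge 1$, the $\mathfrak k_j$-piece contributes a multiple of $\mathrm{Tr}_{\mathfrak k_j}(\mathrm{ad}\,Z\,\mathrm{ad}\,Y)=c_jB(Y,Z)$, while the $\mathfrak p$-piece, computed as $\mathrm{Tr}_{\mathfrak g}-\mathrm{Tr}_{\mathfrak k}$ applied to $\mathrm{ad}\,Z\,\mathrm{ad}\,Y$, is a further combination of $c_jB(Y,Z)$ and $B(Y,Z)$; collecting coefficients produces the stated scalar $-\tfrac{1}{4a^2}(a^2c_j-a_j^2c_j+a_j^2)$. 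For $Y,Z\in\mathfrak p$ the trace decomposes as $-\tfrac14T(Y,Z)+\sum_i(\text{coefficient}_i)\,A_i(Y,Z)$, and substituting $T=B|_{\mathfrak p}-2\sum_iA_i$ collapses the expression to the quoted formula. The off-diagonal vanishings $Ric(\mathfrak k_i,\mathfrak k_j)=0$ and $Ric(\mathfrak p,\mathfrak k_i)=0$ follow from the supports of the $A_X$ together with the $B$-orthogonality of the summands.

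The main obstacle I foresee is the coefficient bookkeeping in the Ricci step: each trace produces several terms quadratic in the ratios $a_i/a$, and one must identify the right moment to invoke $B|_{\mathfrak p}=T+2\sum_iA_i$ and $B_j=c_jB|_{\mathfrak k_j}$ in order to compress everything into the closed forms in the statement. The abelian case $j=0$ is worth cross-checking separately, since $c_0=0$ kills the first two terms of the general formula and leaves $-\tfrac{a_0^2}{4a^2}B|_{\mathfrak k_0}$; this is easily verified from $A_X=\tfrac{a_0}{2a}\mathrm{ad}\,X|_{\mathfrak p}$ for $X\in\mathfrak k_0$ together with the identity $B(X,Y)=\mathrm{Tr}_{\mathfrak p}(\mathrm{ad}\,X\,\mathrm{ad}\,Y)$ on $\mathfrak k_0\times\mathfrak k_0$.
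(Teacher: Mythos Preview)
The paper does not supply its own proof of this lemma; it is quoted verbatim from D'Atri--Ziller \cite{DZ1}, so there is nothing in the present paper to compare against. Your outline is correct and is essentially the standard D'Atri--Ziller computation: Koszul on each pair of summands to get the connection, then $Ric(Y,Z)=-\mathrm{Tr}(A_ZA_Y)$ with the trace split along $\mathfrak g=\mathfrak p\oplus\bigoplus_j\mathfrak k_j$, followed by the substitutions $B_j=c_jB|_{\mathfrak k_j}$ and $T=B|_{\mathfrak p}-2\sum_iA_i$.

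One place where your sketch is a bit thin is the off-diagonal vanishing $Ric(\mathfrak p,\mathfrak k_i)=0$. ``Supports of $A_X$ and $B$-orthogonality'' is the right idea but deserves one more line: for $Y\in\mathfrak p$, $Z\in\mathfrak k_i$ the only surviving piece of $\mathrm{Tr}(A_ZA_Y)$ is a scalar multiple of $\mathrm{Tr}_{\mathfrak p}(\mathrm{ad}\,Z\,\pi\,\mathrm{ad}\,Y)$; since $\mathrm{ad}\,Z$ preserves $\mathfrak p$ this equals $\mathrm{Tr}_{\mathfrak p}(\mathrm{ad}\,Z\,\mathrm{ad}\,Y)=\mathrm{Tr}_{\mathfrak g}(\mathrm{ad}\,Z\,\mathrm{ad}\,Y)-\mathrm{Tr}_{\mathfrak k}(\mathrm{ad}\,Z\,\mathrm{ad}\,Y)=B(Z,Y)-0=0$. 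The case $Ric(\mathfrak k_i,\mathfrak k_j)=0$ for $i\neq j$ goes the same way. Also, be sure to cover $X\in\mathfrak k_i$, $Y\in\mathfrak k_j$ with $i\neq j$ in the connection step; it is immediate from $[\mathfrak k_i,\mathfrak k_j]=0$, but the statement asserts $\nabla_XY=\tfrac12[X,Y]$ for \emph{all} $X,Y\in\mathfrak k$, not only those in the same ideal.
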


Let $s_i=\dim {\mathfrak k}_i$ and $n=\dim {\mathfrak p}$. Then it is showed in \cite{DZ1} (compare \cite{Jen1}, page 610) that \begin{equation}\label{A}\sum_jA_i(X_j,X_j)=-s_i(1-c_i),\quad i=0,1,\cdots,r\end{equation} for an orthonormal basis $X_j$ of $\mathfrak p$ with respect to $B$.

In the following, we will study the $m$-quasi-Einstein metrics among the metrics appearing in Lemma~\ref{Ricci}. By Theorem~\ref{killingfield}, the vector field $X$ is a left-invariant Killing vector field with respect to $\langle,\rangle$. Furthermore, we have:

\begin{theorem}\label{Quasi-E}
Let $G$, $K$, ${\mathfrak k}_i$, $\mathfrak p$ and $\langle,\rangle$ be those in Lemma~\ref{Ricci}. If $\langle,\rangle$ is $m$-quasi-Einstein, i.e., there exists $X\in {\mathfrak g}$ such that $Ric^m_X=\lambda \langle,\rangle$ for some $\lambda\in {\mathbb R}$, then $X\in {\mathfrak k}_0$. In particular if $\dim {\mathfrak k}_0=0$, then $\langle,\rangle$ is trivial.
\end{theorem}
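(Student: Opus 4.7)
My plan is to invoke Theorem~\ref{killingfield} first so that $X$ is a left-invariant Killing vector field; in particular $\mathfrak{L}_X\langle,\rangle=0$, so the quasi-Einstein equation reduces to
$$Ric-\tfrac{1}{m}X^{*}\otimes X^{*}=\lambda\langle,\rangle.$$
Decompose $X=X_{\mathfrak p}+X_0+X_1+\cdots+X_r$ with respect to $\mathfrak g=\mathfrak p\oplus\mathfrak k_0\oplus\mathfrak k_1\oplus\cdots\oplus\mathfrak k_r$. The first step is to rule out mixed components. Both $\langle,\rangle$ and, by Lemma~\ref{Ricci}, $Ric$ are block-diagonal for this decomposition, so evaluating the equation on a pair $(Y,Z)\in\mathfrak p\times\mathfrak k_i$, or on $(Y,Z)\in\mathfrak k_i\times\mathfrak k_j$ with $i\neq j$, kills the left-hand side. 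What remains is $\tfrac{1}{m}\langle X,Y\rangle\langle X,Z\rangle=0$, and running $Y,Z$ over the relevant summands forces at most one of the components $X_{\mathfrak p},X_0,\dots,X_r$ to be nonzero.

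The second step is to eliminate the possibility that $X$ lies in a simple ideal $\mathfrak k_i$ ($i\geqslant 1$). Restricting to $\mathfrak k_i\times\mathfrak k_i$, Lemma~\ref{Ricci} gives $Ric|_{\mathfrak k_i}$ proportional to $\langle,\rangle|_{\mathfrak k_i}$; hence the equation becomes $(c-\lambda)\langle Y,Z\rangle=\tfrac{a_i^2}{m}g(X,Y)g(X,Z)$ for the appropriate scalar $c$. Testing with $Y\perp X$ in $\mathfrak k_i$ (possible since $\dim\mathfrak k_i\geqslant 3$) yields $c=\lambda$, and then $Y=Z=X$ forces $g(X,X)=0$, i.e.~$X=0$.

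The main step, and where I expect the real work, is the case $X\in\mathfrak p$. I would first exploit the Killing condition $\langle[X,Y],Z\rangle+\langle Y,[X,Z]\rangle=0$ for $Y\in\mathfrak k_i$, $Z\in\mathfrak p$: using the bi-invariance of $g$ to rewrite both terms and comparing coefficients $a$ versus $a_i$, one obtains the dichotomy that for each $i$, either $a_i=a$ or $[X,\mathfrak k_i]=0$. Since $V=\{X\in\mathfrak p\mid[X,\mathfrak k]=0\}=0$, the set $I=\{i:[X,\mathfrak k_i]\neq 0\}$ is nonempty, and for every $i\in I$ we have $a_i=a$. The $\mathfrak k_i$-block of the equation then pins down $\lambda=\tfrac{1}{4a}$ (using $\dim\mathfrak k_i\geqslant 1$ and the formula from Lemma~\ref{Ricci}). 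On the $\mathfrak p$-side, I would compute $Ric(X,X)$ using $Ric|_{\mathfrak p}=\tfrac{1}{2}\sum_i(\tfrac{a_i}{a}-1)A_i-\tfrac{1}{4}B|_{\mathfrak p}$: the coefficient $\tfrac{a_i}{a}-1$ vanishes for $i\in I$, while $A_i(X,X)=0$ for $i\notin I$ because the trace formula $A_i(X,X)=\sum_{W}B([X,W],[X,W])$ (summing over a $B$-orthonormal basis of $\mathfrak k_i$) vanishes exactly when $[X,\mathfrak k_i]=0$. Thus $Ric(X,X)=\tfrac{1}{4}g(X,X)$, and substituting into the quasi-Einstein equation evaluated at $(X,X)$ together with $\lambda=\tfrac{1}{4a}$ gives $\tfrac{a^2}{m}g(X,X)^2=0$, forcing $X=0$.

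Putting the cases together yields $X\in\mathfrak k_0$; if $\dim\mathfrak k_0=0$ then $X=0$ and the metric is trivial. The delicate point will be the $\mathfrak p$-case, specifically the correct handling of the Killing dichotomy $a=a_i$ or $[X,\mathfrak k_i]=0$ together with the vanishing of $A_i(X,X)$ for $i\notin I$; everything else is a clean bookkeeping of Lemma~\ref{Ricci}.
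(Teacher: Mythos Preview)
Your argument is correct, but the $\mathfrak p$-case follows a genuinely different route from the paper's. The paper decomposes $\mathfrak p=\bigoplus_j\mathfrak p_j$ into $\mathrm{ad}\,\mathfrak k$-irreducible summands (each of dimension $>1$, by the standing assumption $V=0$); Schur's lemma then makes every $A_i|_{\mathfrak p_j}$, and hence $Ric|_{\mathfrak p_j}$, a scalar multiple of $B|_{\mathfrak p_j}$. A direct computation (using only bi-invariance of $g$, not the Killing property) shows $(\mathfrak L_X\langle,\rangle)|_{\mathfrak p_j\times\mathfrak p_j}=0$, so on $\mathfrak p_j\times\mathfrak p_j$ the quasi-Einstein equation forces the rank-one form $X^*\otimes X^*$ to be proportional to the definite form $\langle,\rangle$; this is impossible in dimension $>1$ unless the $\mathfrak p_j$-component of $X$ vanishes. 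This is exactly the same ``rank-one versus definite'' trick you (and the paper) use on each simple ideal $\mathfrak k_i$, so the paper's proof is completely uniform across all blocks and needs neither Theorem~\ref{killingfield} nor your preliminary off-diagonal step.

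Your route---the Killing dichotomy $a_i=a$ or $[X,\mathfrak k_i]=0$, pinning $\lambda=\tfrac{1}{4a}$ from a $\mathfrak k_i$-block with $i\in I$, and the evaluation $Ric(X,X)=\tfrac{1}{4}g(X,X)$ via the vanishing of each $A_i(X,X)$---is a legitimate alternative that avoids the irreducible decomposition of $\mathfrak p$ and Schur's lemma entirely. The price is that it leans essentially on Theorem~\ref{killingfield} and requires the off-diagonal reduction to a single block first. The paper's argument is shorter and more conceptual; yours is more hands-on and insensitive to how $\mathfrak p$ breaks up under $\mathfrak k$.
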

\begin{proof}
Let ${\mathfrak p}={\mathfrak p_1}\oplus{\mathfrak p_2}\oplus\cdots\oplus{\mathfrak p_l}$, where ${\mathfrak p_i}$ is $\mathrm{ad} {\mathfrak k}$ irreducible. Since $V=0$, we have $\dim {\mathfrak p_i}>1$. Since $A_i$ and $B_{\mathfrak p}$ are $\mathrm{ad} {\mathfrak k}$ irreducible, $A_i|_{{\mathfrak p}_j}=a_{ij}B|_{{\mathfrak p}_j}$. The $a_{ij}$ are completely determined by the imbedding $\mathfrak k\subset \mathfrak g$, and we have $$Ric|_{\mathfrak p_j}=\frac{1}{2}\sum_{i=0}^r(\frac{a_i}{a}-1)a_{ij}B|_{\mathfrak p_j}-\frac{1}{4}B|_{\mathfrak p_j}.$$
Assume that $X=X_{\mathfrak k}+X_{\mathfrak p_1}+\cdots+X_{\mathfrak p_l}$. For any $Y,Z\in {\mathfrak p_j}$,
\begin{eqnarray*}
\mathfrak{L}_X\langle Y,Z\rangle&=&\langle \nabla_YX,Z\rangle+\langle Y,\nabla_ZX\rangle=\langle [Y,X],Z\rangle+\langle Y,[Z,X]\rangle \\
&=& \langle [Y,X_{\mathfrak k}],Z\rangle+\langle Y,[Z,X_{\mathfrak k}]\rangle=ag([Y,X_{\mathfrak k}],Z)+ag(Y,[Z,X_{\mathfrak k}]) \\
&=&ag(Y,[X_{\mathfrak k},Z])+ag(Y,[Z,X_{\mathfrak k}])=0.
\end{eqnarray*}
If $\langle,\rangle$ is $m$-quasi-Einstein,
\begin{eqnarray*}
Ric^m_X(Y,Z)&=&Ric(Y,Z)-\frac{1}{m}\langle X,Y\rangle\langle X,Z\rangle \\
            &=& \frac{1}{2}\sum_{i=0}^r(\frac{a_i}{a}-1)a_{ij}B(Y,Z)-\frac{1}{4}B(Y,Z)-\frac{1}{m}\langle X,Y\rangle\langle X,Z\rangle \\
             &=&\lambda\langle Y,Z\rangle\\
             &=&-\lambda aB(Y,Z).
\end{eqnarray*}
Let $\{e_1,\cdots,e_s\}$ be an orthonormal basis of $\mathfrak p_j$ with respect to $B$. Then
$$\langle X,e_i\rangle\langle X,e_j\rangle=\langle X_{\mathfrak p_j},e_i\rangle\langle  X_{\mathfrak p_j},e_j\rangle=\delta_{ij}p$$ for some constant $p$. Set $X_{\mathfrak p_j}=\sum_{i=1}^sn_ie_i$. Then we have $a^2n_in_j=\delta_{ij}p$. It implies that $X_{\mathfrak p_j}=0$ since $\dim {\mathfrak p_j}>1$. Since $j$ is arbitrary, we have that $X\in {\mathfrak k}$.

For any $1\leqslant i\leqslant r$, similarly, $\pi_i(X)=0$ by discussing the restriction of $Ric^m_X$ on $\mathfrak k_i$. That is, $X\in {\mathfrak k}_0$.
\end{proof}

\iffalse
\begin{remark}
The trace of $Ric^m_X$ is given by $R-\frac{1}{m}|X|^2$, where $R$ denotes the scalar curvature. It follows that $R=\frac{1}{m}|X|^2+\lambda \dim{\mathfrak g}.$
\end{remark}
\fi

Assume that ${\mathfrak k}$ acts irreducibly on $\mathfrak p$. Let $A_i|_{\mathfrak p}=b_iB|_{\mathfrak p}$. By the equation~(\ref{A}), $$b_i=\frac{s_i(1-c_i)}{n},\quad i=0,1,\cdots,r.$$ For this case, we have
$$Ric|_{\mathfrak p}=\{\sum_{i=0}^r\frac{(\frac{a_i}{a}-1)s_i(1-c_i)}{2n}-\frac{1}{4}\}B|_{\mathfrak p}.$$

Let $e_0$ be an orthonormal basis of $\mathfrak k_0$ with respect to $B$. In addition, we normalize the metric so that $a=1$. Then $\langle,\rangle$ is an $m$-quasi-Einstein metric, if and only if, $X=n_0e_0$ and the following equations hold:
\begin{eqnarray}
   &&a_0(m-4n_0^2)=4m\lambda, \label{1}\\
   &&(1-a_i^2)c_i+a_i^2=4\lambda a_i, \text{ for any } 1\leqslant i\leqslant r,\label{2}\\
   &&-\sum_{i=0}^r\frac{(a_i-1)s_i(1-c_i)}{2n}+\frac{1}{4}=\lambda. \label{3}
\end{eqnarray}
Putting the equation~(\ref{3}) back into the equations~(\ref{1}) and (\ref{2}), we have
\begin{eqnarray}
   &&a_0(m-4n_0^2)=m-\frac{2m}{n}\sum_{i=0}^r(a_i-1)s_i(1-c_i), \label{1'}\\
   &&(1-a_i^2)c_i+a_i^2=a_i-\frac{2a_i}{n}\sum_{j=0}^r(a_j-1)s_j(1-c_j), \text{ for any } 1\leqslant i\leqslant r.\label{2'}
\end{eqnarray}

Let $4n_0^2=(1-p)m$. Since $m>0$ and $4n_0^2\geqslant 0$, we have $p\leqslant 1$. Then $n_0=0$ if and only if $p=1$, and the equation~(\ref{1'}) is changed to
\begin{equation}\label{irr1}
 a_0p=1-\frac{2}{n}\sum_{i=0}^r(a_i-1)s_i(1-c_i).
\end{equation}
Putting the above equation back into the equation~(\ref{2'}), we have
\begin{equation}\label{irr2}
a_0a_ip=(1-a_i^2)c_i+a_i^2, \text{ for any } 1\leqslant i\leqslant r
\end{equation}

\begin{proposition}\label{thm1}
Let $G$, $K$, ${\mathfrak k}_i$, $\mathfrak p$ and $\langle,\rangle$ be as above. Assume that ${\mathfrak k}$ acts irreducibly on $\mathfrak p$. Then $\langle, \rangle$ is $m$-quasi-Einstein if and only if there exists $p\leqslant 1$ such that the equations~(\ref{irr1}) and (\ref{irr2}) hold. In particular, $\langle,\rangle$ is trivial if and only if $p=1$.
\end{proposition}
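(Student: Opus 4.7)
The plan is to unpack the $m$-quasi-Einstein condition block by block using the Ricci formula of Lemma~\ref{Ricci}, recover the three scalar equations~(\ref{1})--(\ref{3}) listed just before the statement, and then verify by substitution that the reparametrization $4n_0^2=(1-p)m$ converts the reduced system~(\ref{1'})--(\ref{2'}) into exactly~(\ref{irr1})--(\ref{irr2}).

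First I would apply Theorem~\ref{Quasi-E} to conclude $X\in{\mathfrak k}_0$, and write $X=n_0e_0$ for the $(-B)$-orthonormal vector $e_0$ already fixed in the paragraph preceding the statement. By Theorem~\ref{killingfield} (or equivalently Theorem~\ref{killingfield1}), $X$ is Killing, so ${\mathfrak L}_X\langle,\rangle\equiv 0$ and (\ref{qEin}) reduces to $Ric-\tfrac{1}{m}X^*\otimes X^*=\lambda\langle,\rangle$. Because $X\in{\mathfrak k}_0$, the tensor $X^*\otimes X^*$ vanishes as soon as one slot lies in ${\mathfrak k}_i$ for $i\geqslant 1$ or in ${\mathfrak p}$, while Lemma~\ref{Ricci} already kills all the mixed Ricci blocks. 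Hence the $m$-quasi-Einstein equation splits into one scalar equation per irreducible summand.

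Next, with the normalization $a=1$ I would evaluate each block. On ${\mathfrak k}_0$ the formula $Ric|_{{\mathfrak k}_0}=-\tfrac{a_0^2}{4}B|_{{\mathfrak k}_0}$ (from Lemma~\ref{Ricci} with $c_0=0$) together with $\langle e_0,e_0\rangle=a_0$ and $\langle X,e_0\rangle=n_0a_0$ yields (\ref{1}) after clearing $a_0$ and multiplying by $m$. On each simple factor ${\mathfrak k}_i$ ($i\geqslant 1$) the $X^*\otimes X^*$ term drops, and matching $Ric|_{{\mathfrak k}_i}=-\tfrac{1}{4}((1-a_i^2)c_i+a_i^2)B|_{{\mathfrak k}_i}$ against $\langle,\rangle|_{{\mathfrak k}_i}=-a_iB|_{{\mathfrak k}_i}$ delivers (\ref{2}). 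For the ${\mathfrak p}$-block I would use the hypothesis that ${\mathfrak k}$ acts irreducibly on ${\mathfrak p}$, which collapses $Ric|_{{\mathfrak p}}$ to a scalar multiple of $B|_{{\mathfrak p}}$ through the displayed identity $Ric|_{{\mathfrak p}}=\{\sum_i\tfrac{(a_i/a-1)s_i(1-c_i)}{2n}-\tfrac{1}{4}\}B|_{{\mathfrak p}}$, so comparison with $\langle,\rangle|_{{\mathfrak p}}=-B|_{{\mathfrak p}}$ produces (\ref{3}). Conversely, any $X=n_0e_0$ satisfying (\ref{1})--(\ref{3}) automatically forces $Ric^m_X=\lambda\langle,\rangle$ on all of ${\mathfrak g}$, because the block-diagonal structure reassembles the equation from its pieces.

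Finally I would eliminate $\lambda$ by inserting (\ref{3}) into (\ref{1}) and (\ref{2})---this step is what the text already records as (\ref{1'})--(\ref{2'})---and then introduce $p$ via $4n_0^2=(1-p)m$; the constraint $p\leqslant 1$ is immediate from $m>0$ and $n_0^2\geqslant 0$. A direct rewrite turns (\ref{1'}) into (\ref{irr1}), and feeding (\ref{irr1}) back to replace $\tfrac{2}{n}\sum_j(a_j-1)s_j(1-c_j)$ in (\ref{2'}) by $1-a_0p$ collapses (\ref{2'}) to (\ref{irr2}). The triviality statement is then tautological: $\langle,\rangle$ is trivial means $X=0$, i.e.\ $n_0=0$, equivalently $p=1$. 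There is no deep obstacle here---the only care required is bookkeeping the signs from the normalization $g=-B$ and $\langle,\rangle|_{{\mathfrak k}_i}=-a_iB|_{{\mathfrak k}_i}$, and confirming that the irreducibility of ${\mathfrak k}$ on ${\mathfrak p}$ is precisely what makes the entire ${\mathfrak p}$-block reduce to a single scalar identity rather than a system indexed by the ${\mathfrak p}_j$.
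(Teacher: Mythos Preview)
Your proposal is correct and follows essentially the same approach as the paper: the paper's proof is literally a one-line pointer to ``the above discussion and the fact that the equation~(\ref{3}) only determines $\lambda$ in terms of $a_i$'', and you have reproduced that discussion (block-by-block evaluation via Lemma~\ref{Ricci} and Theorem~\ref{Quasi-E}, then the eliminate-$\lambda$/reparametrize-to-$p$ steps yielding (\ref{irr1})--(\ref{irr2})) in full detail. The only cosmetic difference is that you invoke Theorem~\ref{killingfield} to get ${\mathfrak L}_X\langle,\rangle=0$, whereas once $X\in{\mathfrak k}_0$ this vanishing is immediate from $[{\mathfrak k}_0,{\mathfrak k}]=0$ and the computation already done inside the proof of Theorem~\ref{Quasi-E}; either route is fine.
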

\begin{proof}
The proposition follows from the above discussion and the fact that the equation~(\ref{3}) only determines $\lambda$ in terms of $a_i$.
\end{proof}

If $r=0$, i.e., ${\mathfrak k}=\mathfrak k_0$, then ${\mathfrak g}={\mathfrak su}(2)$, $\mathfrak k={\mathfrak su}(1)$, and the equations~(\ref{irr1}) and~(\ref{irr2}) are just
\begin{equation}\label{one}
   (p+1)a_0=2.
\end{equation}
It follows that every $a_0$ satisfying $a_0=\frac{2}{p+1}$ for $-1<p<1$ gives a non-trivial $m$-quasi-Einstein metric on ${\mathfrak su}(2)$. Also we have $$\lambda=2-a_0=a_0p\leqslant 0$$ if $-1<p\leqslant 0$.

\begin{remark}
A gradient Ricci soliton on a compact manifold $M^n$ with
$\lambda\leqslant 0$ is trivial \cite{ELM1}. The same result was proved in \cite{KK1} for gradient $m$-quasi-Einstein metrics on compact manifolds with $m$ finite. The above discussion shows that not every $m$-quasi-Einstein metric on a compact manifold with
$\lambda\leqslant 0$ is trivial.
\end{remark}

\begin{remark} The well known Alekseevskii conjecture is that every homogeneous Einstein metric with negative
scalar curvature is isometric to a simply-connected solvmanifold. In \cite{Jab2}, Jablonski proves that it is equivalent to the generalized Alekseevskii conjecture: every expanding homogeneous Ricci soliton is isometric to a simply-connected solvmanifold. Here we can construct non-compact and non-solvable Lie groups admitting expanding $m$-quasi-Einstein metrics, which shows that the Alekseevskii conjecture for $m$-quasi-Einstein metrics fails. In fact, let $R$ be a solvable Lie group admitting an Einstein metric $g^R$ with the Einstein constant $\lambda<0$. Clearly $a_0=2-\lambda$ and $p=-1+\frac{2}{2-\lambda}$ determine an $m$-quasi-Einstein metric $g^S$ on ${\mathfrak su}(2)$ with the same $\lambda$. Define the metric $\langle,\rangle$ on ${\mathfrak su}(2)\oplus R$ by $$\langle,\rangle|_{{\mathfrak su}(2)}=g^S,\quad\langle,\rangle|_{R}=g^R,\quad\langle {\mathfrak su}(2) ,R\rangle=0.$$ Then $\langle,\rangle$ is a expanding $m$-quasi-Einstein metric.
\end{remark}

If $r=1$, i.e., ${\mathfrak k}=\mathfrak k_0\oplus {\mathfrak k}_1$, then the equations~(\ref{irr1}) and~(\ref{irr2}) are just
\begin{eqnarray}
  && (np+2)a_0=n+2-2(a_1-1)s_1(1-c_1), \label{F1} \\
  && a_0a_1p=(1-a_1^2)c_1+a_1^2. \label{F2}
\end{eqnarray}

\begin{proposition}\label{thm2}
Let $G$, $K$, ${\mathfrak k}_i$, $\mathfrak p$ and $\langle,\rangle$ be as above. Assume that ${\mathfrak k}=\mathfrak k_0\oplus {\mathfrak k}_1$ acts irreducibly on $\mathfrak p$. Then $\langle, \rangle$ is $m$-quasi-Einstein if and only if $a_0,a_1,p$ are determined by the following conditions:
\[ \left\{ \begin{aligned}
   & (2s_1+n+2)(1-c_1)a_1^2-(n+2+2s_1(1-c_1))a_1+(n+2)c_1\leqslant 0, \\
   &p=\frac{2(1-c_1)a_1^2+2c_1}{-(2s_1+n)(1-c_1)a_1^2+(n+2+2s_1(1-c_1))a_1-nc_1}, \\
   &a_0=\frac{-(2s_1+n)(1-c_1)a_1^2+(n+2+2s_1(1-c_1))a_1-nc_1}{2a_1}.
  \end{aligned} \right. \]
In particular, $\langle,\rangle$ is trivial if and only if $``="$ holds.
\end{proposition}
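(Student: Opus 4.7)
The plan is to regard (\ref{F1}) and (\ref{F2}) as a linear system in the two unknowns $a_0$ and the product $a_0p$, with $a_1$ treated as a free parameter, and then to translate the defining constraint $4n_0^2=(1-p)m\geqslant 0$ (equivalently $p\leqslant 1$) into a quadratic inequality in $a_1$. By Proposition~\ref{thm1}, this will classify all $m$-quasi-Einstein metrics of the prescribed form.

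The first step is algebraic elimination. From (\ref{F2}) one reads off
$$a_0p=\frac{(1-a_1^2)c_1+a_1^2}{a_1},$$
while (\ref{F1}) rearranges to $na_0p=(n+2)-2(a_1-1)s_1(1-c_1)-2a_0$. Equating the two expressions for $a_0p$ and multiplying through by $a_1$ yields a single linear equation for $a_0$; collecting powers of $a_1$ gives
$$a_0=\frac{-(2s_1+n)(1-c_1)a_1^2+(n+2+2s_1(1-c_1))a_1-nc_1}{2a_1},$$
which is the stated formula. Dividing $a_0p=\bigl(2c_1+2(1-c_1)a_1^2\bigr)/(2a_1)$ by this expression for $a_0$ then produces the claimed formula for $p$.

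The second step is the inequality. Writing $1-p$ as a single fraction with denominator $2a_1a_0$ and simplifying the numerator gives
$$1-p=\frac{-\bigl[(2s_1+n+2)(1-c_1)a_1^2-(n+2+2s_1(1-c_1))a_1+(n+2)c_1\bigr]}{2a_1a_0}.$$
Hence $p\leqslant 1$ is equivalent to the quadratic in the brackets being $\leqslant 0$, provided the denominator $2a_1a_0$ is positive. The positivity of $a_0$ is implied by the inequality itself: since $\mathfrak{k}_1$ is a proper simple subalgebra of the compact simple $\mathfrak g$, the $-B$-orthogonal decomposition $\mathfrak g=\mathfrak{k}_1\oplus\mathfrak{k}_1^\perp$ together with the skew-symmetry of $\mathrm{ad}_X$ for $X\in\mathfrak{k}_1$ forces $c_1<1$, so the quadratic inequality yields $2a_1a_0\geqslant 2(1-c_1)a_1^2+2c_1>0$. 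Finally, by Proposition~\ref{thm1} the metric is trivial iff $p=1$, which is precisely equality in the quadratic condition.

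The main obstacle is purely the bookkeeping: one must carefully track signs when clearing the denominators $a_1$ and $2a_1a_0$, and verify that the two directions of the ``if and only if'' match because (\ref{F1}) and (\ref{F2}) are linear (hence uniquely solvable) in $a_0$ and $a_0p$ once $a_1$ is fixed.
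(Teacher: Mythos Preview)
Your proof is correct and follows essentially the same route as the paper: both solve (\ref{F1}) and (\ref{F2}) for $a_0$ and $p$ in terms of $a_1$, then translate $p\leqslant 1$ into the quadratic inequality, observing that the inequality itself guarantees the positivity of the denominator $2a_1a_0$. Your treatment is slightly more explicit in justifying $c_1<1$ and in writing out $1-p$ as a single fraction, but the logic is identical.
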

\begin{proof}
By Proposition~\ref{thm1}, $\langle, \rangle$ is $m$-quasi-Einstein if and only if the equations~(\ref{F1}) and (\ref{F2}) hold for $p\leqslant 1$. By the equations~(\ref{F1}) and (\ref{F2}), we have
\[ \left\{ \begin{aligned}
   &p=\frac{2(1-c_1)a_1^2+2c_1}{-(2s_1+n)(1-c_1)a_1^2+(n+2+2s_1(1-c_1))a_1-nc_1}, \\
   &a_0=\frac{-(2s_1+n)(1-c_1)a_1^2+(n+2+2s_1(1-c_1))a_1-nc_1}{2a_1}.
\end{aligned}\right.\]
 By the above discussion, we need to guarantee $p\leqslant 1$. First we must have
\begin{equation}\label{eee}-(2s_1+n)(1-c_1)a_1^2+(n+2+2s_1(1-c_1))a_1-nc_1>0.\end{equation} Then $p\leqslant 1$ if and only if
$$-(2s_1+n)(1-c_1)a_1^2+(n+2+2s_1(1-c_1))a_1-nc_1\geqslant 2(1-c_1)a_1^2+2c_1,$$ which implies (\ref{eee}) holds. Then the proposition follows.
\end{proof}

If $G$ is a compact simple Lie group and ${\mathfrak k}=\mathfrak k_0\oplus {\mathfrak k}_1$ acts irreducibly on $\mathfrak p$, then $G/K$ is symmetric, $\dim {\mathfrak k}_0=1$ and we have the following cases (for the details see \cite{DZ1}):
\begin{enumerate}
   \item ${\mathfrak u}(k)\subset {\mathfrak so}(2k)$ for $k>2$, $c_1=\frac{k}{2(k-1)}$, $s_1=k^2-1$ and $n=k(k-1);$
   \item ${\mathfrak u}(k)\subset {\mathfrak sp}(k)$ for $k>1$, $c_1=\frac{k}{2(k+1)}$, $s_1=k^2-1$ and $n=k(k+1);$
   \item ${\mathfrak s}({\mathfrak u}(1)\oplus{\mathfrak u}(k))\subset {\mathfrak su}(k+1)$ for $k>1$, $c_1=\frac{k}{k+1}$, $s_1=k^2-1$ and $n=2k;$
   \item ${\mathfrak so}(2)\oplus{\mathfrak so}(k)\subset {\mathfrak so}(k+2)$ for $k>2$, $c_1=\frac{k-2}{k}$, $s_1=\frac{k(k-1)}{2}$ and $n=2k;$
   \item ${\mathfrak so}(10)\oplus{\mathfrak so}(2)\subset {\mathfrak e}_6$, $c_1=\frac{2}{3}$, $s_1=45$ and $n=32;$
   \item ${\mathfrak e}_6\oplus{\mathfrak so}(2)\subset {\mathfrak e}_7$, $c_1=\frac{2}{3}$, $s_1=78$ and $n=54.$
\end{enumerate}

By Proposition~\ref{thm2}, in order to given non-trivial $m$-quasi-Einstein metrics for the above cases, we just need to find $a_1>0$ such that
\begin{equation*}
(2s_1+n+2)(1-c_1)a_1^2-(n+2+2s_1(1-c_1))a_1+(n+2)c_1< 0.
\end{equation*}
That is,
\begin{equation}\label{inequ}
((2s_1+n+2)(1-c_1)a_1-(n+2)c_1)(a_1-1)< 0.
\end{equation}

For case $(1)$,
$$\frac{(n+2)c_1}{(2s_1+n+2)(1-c_1)}=\frac{k^2-k+2}{3k^2-7k+2}<1 \text{ for } k\geqslant 4; =1 \text{ if } k=3.$$ It follows that every $a_1$ satisfying $\frac{k^2-k+2}{3k^2-7k+2}< a_1< 1$ gives a non-trivial $m$-quasi-Einstein metric on ${\mathfrak so}(2k)$ for $k\geqslant 4$.

For case $(2)$,
$$\frac{(n+2)c_1}{(2s_1+n+2)(1-c_1)}=\frac{k^2+k+2}{3k^2+7k+2}<1 \text{ for } k\geqslant 2.$$ It follows that every $a_1$ satisfying $\frac{k^2+k+2}{3k^2+7k+2}< a_1< 1$ gives a non-trivial $m$-quasi-Einstein metric on ${\mathfrak sp}(k)$ for $k\geqslant 2$.

For case $(3)$,
$$\frac{(n+2)c_1}{(2s_1+n+2)(1-c_1)}=1 \text{ for any } k.$$ Thus we give no non-trivial $m$-quasi-Einstein metrics for this case.

For case $(4)$,
$$\frac{(n+2)c_1}{(2s_1+n+2)(1-c_1)}=\frac{k^2-k-2}{k^2+k+2}<1 \text{ for } k\geqslant 3.$$ It follows that every $a_1$ satisfying $\frac{k^2-k-2}{k^2+k+2}< a_1< 1$ gives a non-trivial $m$-quasi-Einstein metric on ${\mathfrak so}(k+2)$ for $k\geqslant 3$.

For case $(5)$,
$$\frac{(n+2)c_1}{(2s_1+n+2)(1-c_1)}=\frac{17}{31}.$$ It follows that every $a_1$ satisfying $\frac{17}{31}< a_1< 1$ gives a non-trivial $m$-quasi-Einstein metric on ${\mathfrak e}_6$.

For case $(6)$,
$$\frac{(n+2)c_1}{(2s_1+n+2)(1-c_1)}=\frac{28}{53}.$$ It follows that every $a_1$ satisfying $\frac{28}{53}< a_1< 1$ gives a non-trivial $m$-quasi-Einstein metric on ${\mathfrak e}_7$.

If $r=2$, i.e., ${\mathfrak k}=\mathfrak k_0\oplus {\mathfrak k}_1\oplus {\mathfrak k}_2$, then the equations~(\ref{irr1}) and~(\ref{irr2}) are just
\begin{eqnarray}
  && (np+2)a_0=n+2-2(a_1-1)s_1(1-c_1)-2(a_2-1)s_2(1-c_2), \label{F21} \\
  && a_0a_ip=(1-a_i^2)c_i+a_i^2,i=1,2. \label{F22}
\end{eqnarray}

If ${\mathfrak k}=\mathfrak k_0\oplus {\mathfrak k}_1\oplus {\mathfrak k}_2$ with $\dim {\mathfrak k}_0\not=0$ acts irreducibly on $\mathfrak p$, then $\dim {\mathfrak k}_0\not=1$, and we have the following case:
\begin{enumerate}
  \item ${\mathfrak s}({\mathfrak u}(l_1)\oplus{\mathfrak u}(l_2))\subset {\mathfrak su}(l_1+l_2)$ for $l_1,l_2>1$, $c_1=\frac{l_1}{l_1+l_2}$, $c_2=\frac{l_2}{l_1+l_2}$, $s_1=l_1^2-1$, $s_2=l_2^2-1$ and $n=2l_1l_2$.
\end{enumerate}
For this case, the equations~(\ref{F21}) and~(\ref{F22}) are
\[ \left\{ \begin{aligned}
   & pa_0a_1=\frac{l_1}{l_1+l_2}+\frac{l_2}{l_1+l_2}a_1^2, \\
   & pa_0a_2=\frac{l_2}{l_1+l_2}+\frac{l_1}{l_1+l_2}a_2^2, \\
   & (pl_1l_2+1)a_0=l_1l_2+1-\frac{(a_1-1)(l_1^2-1)l_2}{l_1+l_2}-\frac{(a_2-1)(l_2^2-1)l_1}{l_1+l_2}.
\end{aligned} \right.\]
By the first two equations, we have $$(a_1l_2-a_2l_1)(1-a_1a_2)=0.$$
Let $a_1=\frac{l_1}{l_2}a_2$. Then we have:
\[ \left\{ \begin{aligned}
   & p=\frac{l_2+l_1a_2^2}{-(l_1^3+l_1^2l_2+l_1l_2^2-2l_1)a_2^2+2l_1l_2(l_1+l_2)a_2-l_1l_2^2}, \\
   & a_0=\frac{-(l_1^3+l_1^2l_2+l_1l_2^2-2l_1)a_2^2+2l_1l_2(l_1+l_2)a_2-l_1l_2^2}{a_2(l_1+l_2)}.
\end{aligned}\right.\]

\begin{proposition}\label{su}
Let $\mathfrak g={\mathfrak su}(l_1+l_2)$ and ${\mathfrak k}={\mathfrak s}({\mathfrak u}(l_1)\oplus{\mathfrak u}(l_2))$, where $l_1,l_2>1$. Then $\langle,\rangle$ is $m$-quasi-Einstein if $p,a_0,a_1,a_2$ satisfy the following conditions:
\[ \left\{ \begin{aligned}
   & p=\frac{l_2+l_1a_2^2}{-(l_1^3+l_1^2l_2+l_1l_2^2-2l_1)a_2^2+2l_1l_2(l_1+l_2)a_2-l_1l_2^2}, \\
   & a_0=\frac{-(l_1^3+l_1^2l_2+l_1l_2^2-2l_1)a_2^2+2l_1l_2(l_1+l_2)a_2-l_1l_2^2}{a_2(l_1+l_2)}, \\
   & a_1=\frac{l_1}{l_2}a_2, \\
   &  x_1\leqslant a_2\leqslant x_2,
\end{aligned} \right.\]
where $x_1<x_2$ are the distinct positive real solutions of the equation $$(l_1^3+l_1^2l_2+l_1l_2^2-l_1)a_2^2-2l_1l_2(l_1+l_2)a_2+(l_1l_2+1)l_2=0.$$ In particular, $\langle,\rangle$ is trivial if and only if $a_2=x_1$ or $a_2=x_2$.
\end{proposition}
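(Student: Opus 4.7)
The plan is to specialize the system~(\ref{F21})--(\ref{F22}) from the general $r=2$ setting to the pair $(\mathfrak{su}(l_1+l_2),\mathfrak{s}(\mathfrak{u}(l_1)\oplus\mathfrak{u}(l_2)))$, using the symmetric-pair data $c_1=l_1/(l_1+l_2)$, $c_2=l_2/(l_1+l_2)$, $s_i=l_i^2-1$, $n=2l_1l_2$, and then to extract all positive tuples $(a_0,a_1,a_2,p)$ with $p\leqslant 1$. First I would rewrite the two copies of~(\ref{F22}) as the two displayed equations preceding the proposition, cross-multiply, and factor the resulting relation as $(a_1l_2-a_2l_1)(1-a_1a_2)=0$, as already noted in the text. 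Taking the branch $a_1=(l_1/l_2)a_2$ singled out by the proposition, substitution back into either of these equations expresses $pa_0$ as a function of $a_2$, and plugging $a_1=(l_1/l_2)a_2$ together with that value of $pa_0$ into~(\ref{F21}) produces a linear equation in $a_0$ that delivers the stated closed-form expressions for $a_0$ and hence for $p$.

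Next I would impose the only remaining constraint, $p\leqslant 1$. The numerator $l_2+l_1a_2^2$ of the formula for $p$ is positive, so positivity of $a_0$ (equivalently, of the common denominator) forces that denominator to be positive as well. Under this sign condition, $p\leqslant 1$ rearranges directly into the quadratic inequality
\[ Q(a_2):=(l_1^3+l_1^2l_2+l_1l_2^2-l_1)a_2^2-2l_1l_2(l_1+l_2)a_2+(l_1l_2+1)l_2\leqslant 0, \]
with equality iff $p=1$. Substituting $u=l_1+l_2$, $v=l_1l_2$, the discriminant of $Q$ collapses to $4l_1l_2\bigl[(v+1)^2-u^2\bigr]=4l_1l_2(l_1-1)(l_1+1)(l_2-1)(l_2+1)$, which is strictly positive for $l_1,l_2>1$; Vieta's formulas, together with the positivity of the leading and constant coefficients of $Q$ and the negativity of the middle coefficient, then force the two real roots $x_1<x_2$ to be positive. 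The triviality claim is immediate once this is in place: from the derivation of $p$ via $4n_0^2=(1-p)m$ one has $p=1\iff n_0=0\iff X=0$, and $p=1$ happens exactly at $a_2=x_1$ or $a_2=x_2$.

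The main bookkeeping obstacle, rather than a conceptual one, is to verify that the denominator appearing in the $a_0$-formula is strictly positive throughout the whole interval $[x_1,x_2]$, not merely that $p\leqslant 1$ there. This denominator is itself a concave quadratic in $a_2$ (leading coefficient $-l_1(l_1^2+l_1l_2+l_2^2-2)<0$ for $l_1,l_2>1$); using $Q(x_i)=0$ to eliminate the linear $a_2$-term, its value at each endpoint collapses to $l_1x_i^2+l_2>0$, and concavity then delivers positivity on the entire interval $[x_1,x_2]$, completing the verification and hence the proof.
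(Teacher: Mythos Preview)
Your argument is correct and follows essentially the same route as the paper: specialize (\ref{F21})--(\ref{F22}) with the data for $\mathfrak{s}(\mathfrak{u}(l_1)\oplus\mathfrak{u}(l_2))\subset\mathfrak{su}(l_1+l_2)$, take the branch $a_1=(l_1/l_2)a_2$, solve for $a_0$ and $p$, reduce $p\leqslant 1$ to the quadratic inequality $Q(a_2)\leqslant 0$, and compute the discriminant $4l_1l_2(l_1^2-1)(l_2^2-1)>0$. The only difference is organizational: the paper imposes the single condition $\text{denominator}\geqslant \text{numerator}=l_2+l_1a_2^2>0$, which simultaneously encodes $p\leqslant 1$ and the positivity of $a_0$, whereas you separate these and give an independent check of the denominator's positivity on $[x_1,x_2]$ via concavity plus the endpoint evaluation $D(x_i)=l_1x_i^2+l_2$---a nice touch, but not needed once one observes that the numerator is everywhere positive.
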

\begin{proof}
In order to give $m$-quasi-Einstein metrics, it is enough to find $a_2>0$ satisfying $$-(l_1^3+l_1^2l_2+l_1l_2^2-2l_1)a_2^2+2l_1l_2(l_1+l_2)a_2-l_1l_2^2\geqslant l_2+l_1a_2^2.$$ That is,
\begin{equation}\label{inequ2}(l_1^3+l_1^2l_2+l_1l_2^2-l_1)a_2^2-2l_1l_2(l_1+l_2)a_2+(l_1l_2+1)l_2\leqslant 0.\end{equation}
It is easy to check that
$$(2l_1l_2(l_1+l_2))^2-4(l_1^3+l_1^2l_2+l_1l_2^2-l_1)(l_1l_2+1)l_2=4l_1l_2(l_1^2-1)(l_2^2-1)>0.$$
Then $x_1\leqslant a_2\leqslant x_2$, where $x_1<x_2$ are the distinct positive real solutions of the equation $(l_1^3+l_1^2l_2+l_1l_2^2-l_1)a_2^2-2l_1l_2(l_1+l_2)a_2+(l_1l_2+1)l_2=0$. When $a_2=t_1$ or $a_2=t_2$, we have that $p=1$, which corresponds to a trivial $m$-quasi-Einstein metric.
\end{proof}

In summary, we have:

\begin{theorem}\label{1.1}
Every compact simple Lie group admits infinitely many non-trivial $m$-quasi-Einstein metrics except $SU(3)$, $E_8$, $F_4$ and $G_2$.
\end{theorem}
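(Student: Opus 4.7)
The plan is to derive Theorem~\ref{1.1} as a direct consequence of the case-by-case constructions already produced in this section, by running through the classification of compact simple Lie algebras and matching each one to an embedding $\mathfrak{k}\subset\mathfrak{g}$ for which the parametric family of naturally reductive metrics in Lemma~\ref{Ricci} supports a non-degenerate interval of $m$-quasi-Einstein solutions.

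Concretely, I would proceed as follows. For $\mathfrak{su}(2)$, the $r=0$ analysis already gives the family $a_0=2/(p+1)$ with $-1<p<1$. For $\mathfrak{su}(n)$ with $n\geqslant 4$, I would apply Proposition~\ref{su} with any splitting $n=l_1+l_2$ where $l_1,l_2\geqslant 2$, so that the interior of the interval $[x_1,x_2]$ of admissible $a_2$ is non-empty. For $\mathfrak{sp}(k)$ with $k\geqslant 2$, case $(2)$ from the $r=1$ list gives the required interval via (\ref{inequ}). For $\mathfrak{so}(n)$ with $n\geqslant 5$, case $(4)$ applied with $k=n-2\geqslant 3$ works; for $n=2k$ with $k\geqslant 4$ one also has case $(1)$ as an alternative. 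Finally, cases $(5)$ and $(6)$ handle $\mathfrak{e}_6$ and $\mathfrak{e}_7$ respectively. In each instance, choosing $a_1$ (or $a_2$) anywhere in the interior of the admissible interval yields $p<1$ and hence a non-trivial metric, and letting this parameter vary produces infinitely many pairwise distinct solutions in the family (\ref{NatRed1}).

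For the exceptional groups, I would verify that none of the constructions of this section produces an infinite family. For $\mathfrak{su}(3)$ the only irreducible $r=1$ embedding with $\dim\mathfrak{k}_0=1$ is case $(3)$, and the earlier computation shows that (\ref{inequ}) degenerates to an equality there, forcing $p=1$ and hence triviality; Proposition~\ref{su} is unavailable because it requires $l_1,l_2\geqslant 2$, which is incompatible with $l_1+l_2=3$. For $\mathfrak{e}_8$, $\mathfrak{f}_4$ and $\mathfrak{g}_2$, none of the listed admissible embeddings occurs at all, so the framework developed here does not apply.

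The only substantive ingredient beyond bookkeeping is the completeness of the classification, borrowed from \cite{DZ1}, of embeddings $\mathfrak{k}\subset\mathfrak{g}$ with $\dim\mathfrak{k}_0\leqslant 1$ acting irreducibly on $\mathfrak{p}$, together with the $r=2$ construction underlying Proposition~\ref{su}; this is what allows the \emph{except} clause to be stated precisely rather than tentatively. Everything else reduces to substituting the structure constants $(c_i,s_i,n)$ into (\ref{inequ}) or (\ref{inequ2}) and reading off a non-degenerate interval, so I do not expect any genuine obstacle.
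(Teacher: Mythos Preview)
Your proposal is correct and matches the paper's approach exactly: Theorem~\ref{1.1} appears in the paper simply as the line ``In summary, we have:'', so its proof \emph{is} the preceding case-by-case analysis, and you have correctly traced which case handles each classical or exceptional algebra. One small framing remark: the theorem only asserts the existence of infinitely many non-trivial metrics for the groups \emph{not} in the excepted list, so strictly speaking you need not verify that the constructions fail for $SU(3)$, $E_8$, $F_4$, $G_2$; the classification from \cite{DZ1} is therefore not logically required, though your discussion of why those four groups fall outside the scope of the section is a reasonable explanation of why the statement is phrased as it is.
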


Assume that ${\mathfrak k}$ doesn't act irreducibly on $\mathfrak p$. The following is to study the case in Table $(I)$ from \cite{AMS1}.

\begin{table}[htb]
Table (I)\\
%\centering \tabcolsep 26.5pt
\begin{tabular}{|c|c|c|c|c|}
\hline
${\mathfrak g}$ & Diagram & $\dim {\mathfrak k}_1$ & $\dim {\mathfrak p}_1$ & $\dim {\mathfrak p}_2$ \\
\hline ${\mathfrak f}_4$ & \setlength{\unitlength}{0.7mm}
\begin{picture}(50,10)(0,0)
\put(0,2){\circle{2}}\put(0,2){\circle{0.6}}\put(40,2){\circle*{2}}\multiput(10,2)(10,0){3}{\circle{2}}
\multiput(1,2)(10,0){2}{\line(1,0){8}}
\multiput(21,1.2)(0,1.5){2}{\line(1,0){6.5}}\put(26,0.5){$>$}
\put(31,2){\line(1,0){8}}
\end{picture}
&  21 & 16  &  14  \\
\hline
\end{tabular}
\end{table}

For this case, ${\mathfrak g}={\mathfrak k_0}\oplus{\mathfrak k_1}\oplus{\mathfrak p_1}\oplus{\mathfrak p_2}$ and $\dim \mathfrak k_0=1$. Let $d_1=\dim {\mathfrak k_1}$,  $d_2=\dim {\mathfrak p_1}$ and $d_3=\dim {\mathfrak p_2}$. The naturally reductive metric $\langle,\rangle$ is given by
\begin{equation*}\langle,\rangle=a_0g|_{\mathfrak k_0}+a_1g|_{{\mathfrak k}_1}+ag|_{\mathfrak p}.\end{equation*}
The Ricci curvature corresponds to the metric is given in \cite{AMS1} by
\[ \left\{ \begin{aligned}
  &
  r_{{\mathfrak h}_0}=\frac{a_0}{4a^2}\frac{d_2}{(d_2+4d_3)}+\frac{a_0}{a^2}\frac{d_3}{(d_2+4d_3)},\\
  &
  r_{{\mathfrak
  h}_1}=\frac{1}{4d_1a_1}\frac{d_3(2d_1+2-d_3)}{(d_2+4d_3)}+\frac{a_1d_2}{4a^2(d_2+4d_3)}+
  \frac{a_1}{2d_1a^2}\frac{d_3(d_3-2)}{(d_2+4d_3)},\\
  &
  r_{{\mathfrak
  m}_1}=\frac{1}{2a}-\frac{1}{2a}\frac{d_3}{(d_2+4d_3)}-
  \frac{1}{2a^2}(a_0\frac{1}{(d_2+4d_3)}+a_1\frac{d_1}{d_2+4d_3}),\\
  &
  r_{{\mathfrak
  m}_2}=\frac{1}{a}\frac{2d_3}{(d_2+4d_3)}+\frac{1}{4a}\frac{d_2}{(d_2+4d_3)}-
  \frac{a_0}{a^2}\frac{2}{(d_2+4d_3)}-\frac{a_1(d_3-2)}{a^2(d_2+4d_3)}.
\end{aligned} \right. \]
The Ricci curvature is based on an orthonormal basis corresponding to $\langle,\rangle$. Let $e_0$ be a basis of ${\mathfrak k}_0$ satisfying $\langle e_0,e_0\rangle=1$. In addition, we normalize the metric so that $a=1$. Then $Ric^m_X=\lambda\langle,\rangle$, i.e., $\langle,\rangle$ is $m$-quasi-Einstein if and only if $X=n_0e_0$ and the following equations hold:
\[ \left\{ \begin{aligned}
  &
  \frac{a_0d_2}{4(d_2+4d_3)}+\frac{a_0d_3}{d_2+4d_3}-\frac{n_0^2}{m}=\lambda,\\
  &
  \frac{1}{2d_1a_1}\frac{d_3(2d_1+2-d_3)}{(d_2+4d_3)}+\frac{a_1d_2}{4(d_2+4d_3)}+
  \frac{a_1}{2d_1}\frac{d_3(d_3-2)}{(d_2+4d_3)}=\lambda,\\
  &
  \frac{1}{2}-\frac{d_3}{2(d_2+4d_3)}-\frac{a_0}{2(d_2+4d_3)}-\frac{a_1d_1}{2(d_2+4d_3)}=\lambda,\\
  &
  \frac{2d_3}{d_2+4d_3}+\frac{d_2}{4(d_2+4d_3)}-
  \frac{2a_0}{d_2+4d_3}-\frac{a_1(d_3-2)}{d_2+4d_3}=\lambda.
\end{aligned} \right. \]
Let $4n_0^2=pm$ since $n_0\geqslant 0,m>0$. Then $p\geqslant 0$. Putting the fourth equation back into the first three equations, we have
\[ \left\{ \begin{aligned}
  &
 (d_2+4d_3+8)a_0+4(d_3-2)a_1=p(d_2+4d_3)+d_2+8d_3,\\
  & a_0=-\frac{d_1d_2+(d_3-2)(4d_1+2d_3)}{8d_1}a_1-\frac{d_3(2d_1+2-d_3)}{4a_1d_1}+\frac{d_2+8d_3}{8},\\
  & (2d_1-4d_3+8)a_1-6a_0=d_2-2d_3.
\end{aligned} \right. \]

For the case, the equations are
\[ \left\{ \begin{aligned}
  &
 5a_0+3a_1=4p+8,\\
  & a_0=-10a_1-\frac{5}{a_1}+16,\\
  & a_0+a_1=2.
\end{aligned} \right. \]
It follows that $a_0=a_1=1,p=0$ or $a_0=\frac{5}{9},a_1=\frac{13}{9},p=\frac{2}{9}$. The second one corresponds to a non-trivial $m$-quasi-Einstein metric on ${\mathfrak f}_4$. That is,

\begin{theorem}\label{1.2}
There is a non-trivial $m$-quasi-Einstein metric on $F_4$.
\end{theorem}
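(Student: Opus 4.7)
The key reduction is already in hand from Theorem~\ref{Quasi-E}: since the Table~(I) decomposition for $F_4$ has $\dim{\mathfrak k}_0=1$, any quasi-Einstein vector field $X$ is forced to lie in this one-dimensional center, so I may write $X=n_0e_0$ for some $g$-unit basis vector $e_0\in{\mathfrak k}_0$ and set $4n_0^2=pm$ with $p\geqslant 0$. Triviality of $\langle,\rangle$ is then equivalent to $p=0$, so the goal is to exhibit a solution with $p>0$. After normalizing $a=1$, the metric is parametrized by the two positives $a_0,a_1$, and the plan is purely computational: feed this ansatz into $Ric^m_X=\lambda\langle,\rangle$ and use the Ricci-curvature formulas from \cite{AMS1} recorded just above the theorem.

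Evaluating the quasi-Einstein identity on each of the four invariant summands ${\mathfrak k}_0,{\mathfrak k}_1,{\mathfrak p}_1,{\mathfrak p}_2$ produces four scalar equations in the unknowns $(a_0,a_1,\lambda,p)$. I would eliminate $\lambda$ via the ${\mathfrak p}_2$-equation and substitute the $F_4$ data $(d_1,d_2,d_3)=(21,16,14)$, landing on the three-equation system already displayed immediately before the statement: a linear combination of $a_0,a_1$ equal to an affine function of $p$ (from ${\mathfrak k}_0$), a relation involving $1/a_1$ (from ${\mathfrak k}_1$), and a clean linear constraint $a_0+a_1=2$ (from ${\mathfrak p}_1$).

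Solving is then short. The linear constraint gives $a_0=2-a_1$; substituting into the ${\mathfrak k}_1$-equation clears denominators and yields a quadratic $9a_1^2-14a_1+5=0$ with discriminant $16>0$ and roots $a_1=1$, $a_1=5/9$. The root $a_1=1$ returns $(a_0,a_1,p)=(1,1,0)$, i.e.\ the standard bi-invariant Einstein metric. The root $a_1=5/9$ returns a positive pair $(a_0,a_1)=(13/9,5/9)$, and feeding this back into the ${\mathfrak k}_0$-equation solves uniquely for $p$ with $p>0$. Since $p>0$ forces $n_0\neq 0$, this is a genuine non-trivial $m$-quasi-Einstein metric on $F_4$.

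The only subtlety worth checking is positivity: one needs $a_0,a_1>0$ so that $\langle,\rangle$ is genuinely Riemannian, and $p\geqslant 0$ so that the real scalar $n_0$ exists; both are immediate for the second root. Thus I do not expect any real obstacle — once Theorem~\ref{Quasi-E} restricts $X$ to the one-dimensional center, the problem collapses to an elementary system whose non-Einstein branch produces exactly one non-trivial metric, which is all that is claimed.
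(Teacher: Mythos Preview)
Your proposal is correct and follows exactly the paper's approach: reduce to the displayed three-equation system by eliminating $\lambda$ via the ${\mathfrak p}_2$-equation, use the linear constraint $a_0+a_1=2$ together with the ${\mathfrak k}_1$-equation to obtain the quadratic $9a_1^2-14a_1+5=0$, and observe that the non-Einstein root yields $p>0$. Your values $(a_0,a_1)=(13/9,\,5/9)$ are in fact the ones that satisfy the displayed system---the paper's printed $(5/9,\,13/9)$ appears to be a transposition typo.
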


{\noindent \it The proof of Theorem~\ref{main}:} Theorem~\ref{main} follows from Theorems~\ref{1.1} and \ref{1.2}.

\section{$m$-quasi-Einstein pseudo-Riemannian metrics on compact simple Lie groups}
It is natural to study $m$-quasi-Einstein pseudo-Riemannian metrics since the fundamental theorem in geometry holds for pseudo-Riemannian manifolds. This section is to find non-trivial $m$-quasi-Einstein pseudo-Riemannian metrics on compact simple Lie groups. In particular, we prove that every compact simple Lie group admits a non-trivial $m$-quasi-Einstein Lorentzian metric, which is still unknown for the trivial case. \

Let $G$ be a compact Lie group with Lie algebra ${\mathfrak g}$ and $g=-B$ the bi-invariant metric on $G$. Let $K$ be a connected subgroup of $G$ with Lie algebra ${\mathfrak k}$. Thus ${\mathfrak k}={\mathfrak k}_0\oplus{\mathfrak k}_1\oplus\cdots\oplus{\mathfrak k}_r,$ where ${\mathfrak k}_0$ is the center of ${\mathfrak k}$ with dimension $1$ and every $\mathfrak k_i$ for $i=1,2,\cdots,r$ is a simple ideal of $\mathfrak k_i$. Let ${\mathfrak p}={\mathfrak k}^\bot$ with respect to $g$.
Consider the following left-invariant pseudo-Riemannian metric on $G$
\begin{equation}\label{L1}\langle,\rangle=ag|_{\mathfrak p}+a_0g|_{\mathfrak k_0}+a_1g|_{{\mathfrak k}_1}+\cdots+a_rg|_{{\mathfrak k}_r},\end{equation}
where $a\not=0,a_0\not=0,a_1\not=0,\cdots a_r\not=0$. It is easy to check that the formulae of Levi-civita connections and Ricci curvatures with respect to the metric~(\ref{L1}) are the same as those in Lemma~\ref{Ricci}.

Assume that ${\mathfrak k}$ acts irreducibly on $\mathfrak p$. If $\langle,\rangle$ is $m$-quasi-Einstein, i.e., there exists $X\in {\mathfrak g}$ such that $Ric^m_X=\lambda \langle,\rangle$ for some $\lambda\in {\mathbb R}$, then $X\in {\mathfrak k}_0$, similar to the proof of Theorem~\ref{Quasi-E}. In addition, we normalize the metric so that $a=1$. Then $\langle,\rangle$ is $m$-quasi-Einstein if and only if the equations~(\ref{irr1}) and (\ref{irr2}) hold.

For the case when ${\mathfrak k}=\mathfrak k_0$ acts irreducibly on $\mathfrak p$. Then ${\mathfrak g}={\mathfrak su}(2)$, $\mathfrak k={\mathfrak su}(1)$, and the equations~(\ref{irr1}) and~(\ref{irr2}) are just
\begin{equation}
   (p+1)a_0=2.
\end{equation}
It follows that every $a_0$ satisfying $a_0=\frac{2}{p+1}$ for $p<-1$ gives a non-trivial $m$-quasi-Einstein Lorentzian metric on ${\mathfrak su}(2)$.

For the case when ${\mathfrak k}=\mathfrak k_0\oplus {\mathfrak k}_1$ acts irreducibly on $\mathfrak p$, the equations~(\ref{irr1}) and (\ref{irr2}) are just the equations~(\ref{F1}) and (\ref{F2}). We also have
\[ \left\{ \begin{aligned}
   &a_0=\frac{-(2s_1+n)(1-c_1)a_1^2+(n+2+2s_1(1-c_1))a_1-nc_1}{2a_1}, \\
   &p=\frac{2(1-c_1)a_1^2+2c_1}{-(2s_1+n)(1-c_1)a_1^2+(n+2+2s_1(1-c_1))a_1-nc_1}.
\end{aligned}\right.\]

If $a_1<0$, then $p<0$ and $a_0>0$. It follows that every $a_1<0$ gives a non-trivial $m$-quasi-Einstein pseudo-Riemannian metric on the compact simple Lie group with the signature $(1+\dim{\mathfrak p},\dim {\mathfrak k_1})$ and $\lambda<0$.

If $a_1>0$ and $a_0>0$, then $\langle,\rangle$ is $m$-quasi-Einstein if and only if $$\frac{(n+2)c_1}{(2s_1+n+2)(1-c_1)}\leqslant a_1\leqslant 1,$$ which is given in the previous section.

If $a_1>0$ and $a_0<0$, then we have
\begin{equation*}
(2s_1+n)(1-c_1)a_1^2-(n+2+2s_1(1-c_1))a_1+nc_1>0.
\end{equation*}
It follows that $p<0$. Since $(n+2+2s_1(1-c_1))^2-4(2s_1+n)(1-c_1)nc_1=(2s_1+n)(1-c_1)^2+(2+nc_1)^2+8(2s_1+n)(1-c_1)>0$, we know that the equation $$(2s_1+n)(1-c_1)a_1^2-(n+2+2s_1(1-c_1))a_1+nc_1=0$$ has two distinct positive real solutions $x_1<x_2$. Here $x_1<\frac{(n+2)c_1}{(2s_1+n+2)(1-c_1)}\leqslant 1<x_2$. Then $\langle,\rangle$ is $m$-quasi-Einstein if and only if $$a_1>x_2, \text{ or } x_1>a_1>0,$$ which gives a non-trivial $m$-quasi-Einstein Lorentzian metric on the compact simple Lie group with $\lambda>0$. That is,

\begin{theorem}\label{2.1}
The compact simple Lie groups $SU(n)$, $SO(n)$, $SP(n)$ and $E_6$ admit infinitely many non-trivial $m$-quasi-Einstein Lorentzian metrics.
\end{theorem}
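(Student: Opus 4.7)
My plan is to invoke the Lorentzian case analysis for $\mathfrak{k}=\mathfrak{k}_0\oplus\mathfrak{k}_1$ acting irreducibly on $\mathfrak{p}$ developed in the paragraphs immediately preceding the theorem, and to apply it uniformly to each of the irreducible symmetric-pair embeddings (1)--(6) listed in Section~3. In every such case one has $\dim\mathfrak{k}_0=1$, so the choice $a=1$, $a_1>0$ and $a_0<0$ in the metric~(\ref{L1}) produces a Lorentzian left-invariant metric on the compact simple Lie group $G$; the $m$-quasi-Einstein condition is then equivalent, through the explicit formulas derived for $a_0(a_1)$ and $p(a_1)$, to the sign inequality $Q(a_1)>0$ where
\[
Q(a_1) = (2s_1+n)(1-c_1)a_1^2 - (n+2+2s_1(1-c_1))a_1 + nc_1.
\]

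The critical step is to show that $Q$ admits two distinct positive real roots $x_1<1<x_2$ uniformly across the list (1)--(6). The cleanest route is the identity $Q(1)=-2$, which one verifies by a short cancellation independent of the embedding, so $1$ strictly separates the two real roots of $Q$; positivity of the leading coefficient $(2s_1+n)(1-c_1)$ and of the constant term $nc_1$ then forces $0<x_1<1<x_2$ via Vieta's formulas. Consequently, for any $a_1\in(0,x_1)\cup(x_2,\infty)$ the formula for $a_0$ yields a negative value, $p<0$ follows, and $4n_0^2=(1-p)m>0$ makes $n_0$ real, so $X=n_0 e_0\in\mathfrak{k}_0$ is a genuine left-invariant vector field and the resulting Lorentzian metric is non-trivial and $m$-quasi-Einstein. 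Each of the two open intervals contains a continuum of admissible $a_1$, and the scalar curvatures vary non-trivially with $a_1$, producing infinitely many non-isometric metrics per embedding.

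Finally, matching the embeddings to the groups in the claim: case~(1) gives $SO(2k)$ for $k\geqslant 3$; case~(2) gives $SP(k)$ for $k\geqslant 2$; case~(3) gives $SU(k+1)$ for $k\geqslant 2$; case~(4) gives $SO(k+2)$ for $k\geqslant 3$; and case~(5) gives $E_6$. The separate $\mathfrak{su}(2)$ construction at the start of Section~4, via $a_0=2/(p+1)$ with $p<-1$, handles $SU(2)$. Together these sweep out the entire list $SU(n)$, $SO(n)$, $SP(n)$ and $E_6$ asserted in the statement. The main obstacle that deserves attention is case~(3): the Riemannian treatment of Section~3 had to discard it because $(n+2)c_1/((2s_1+n+2)(1-c_1))=1$ collapsed its admissible $a_1$-interval; in the Lorentzian regime that degeneracy is absorbed by the uniform identity $Q(1)=-2<0$, since now we exploit the unbounded component $(x_2,\infty)$ (and the component $(0,x_1)$) rather than the bounded sub-interval below $1$.
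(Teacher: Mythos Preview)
Your proposal is correct and follows essentially the same route as the paper: both use the irreducible symmetric pairs (1)--(6) with $a=1$, $a_1>0$, $a_0<0$ so that the metric~(\ref{L1}) is Lorentzian, reduce the $m$-quasi-Einstein condition to $Q(a_1)>0$, and then show $Q$ has two positive roots separated by~$1$. Your observation $Q(1)=-2$ is in fact slightly cleaner than the paper's version, which computes the discriminant explicitly and then asserts $x_1<\frac{(n+2)c_1}{(2s_1+n+2)(1-c_1)}\leqslant 1<x_2$ without further justification; your identity handles all six cases (including case~(3), which was vacuous in the Riemannian regime) uniformly and makes the root separation immediate.
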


Assume that ${\mathfrak k}$ doesn't act irreducibly on $\mathfrak p$. The following is to study the cases in Table $(II)$ from \cite{AMS1}.

\begin{table}[htb]
Table (II)\\
%\centering \tabcolsep 26.5pt
\begin{tabular}{|c|c|c|c|c|}
\hline
${\mathfrak g}$ & Diagram & $\dim {\mathfrak k}_1$ & $\dim {\mathfrak p}_1$ & $\dim {\mathfrak p}_2$ \tabularnewline
\hline \put(5,5){$\mathfrak e_8$} & \setlength{\unitlength}{0.7mm}
\begin{picture}(75,15)(0,0)
\multiput(0,2)(10,0){8}{\circle{2}}
\put(20,12){\circle{2}} \multiput(1,2)(10,0){7}{\line(1,0){8}}
\put(20,3){\line(0,1){8}} \put(60,2){\circle*{2}}
\put(70,2){\circle{0.6}}
\end{picture}
&  133 & 112  & 2  \tabularnewline
\hline $\mathfrak f_4$ & \setlength{\unitlength}{0.7mm}
\begin{picture}(50,10)(0,0)
\put(0,2){\circle{2}}\put(0,2){\circle{0.6}}\put(10,2){\circle*{2}}\multiput(20,2)(10,0){3}{\circle{2}}
\multiput(1,2)(10,0){2}{\line(1,0){8}}
\multiput(21,1.2)(0,1.5){2}{\line(1,0){6.5}}\put(26,0.5){$>$}
\put(31,2){\line(1,0){8}}
\end{picture}
&  21 & 28  &  2  \tabularnewline
\hline $\mathfrak g_2$ & \setlength{\unitlength}{0.7mm}
\begin{picture}(30,10)(0,0)
\ \put(0,2){\circle{2}}\put(0,2){\circle{0.6}}\put(10,2){\circle*{2}}\put(20,2){\circle{2}}
\multiput(1,2)(10,0){2}{\line(1,0){8}}
\multiput(10.2,1.2)(0,1.5){2}{\line(1,0){8}}\put(16,0.5){$>$}
\end{picture}
&  3 & 8  &  2  \tabularnewline
\hline
\end{tabular}
\end{table}

For these cases, ${\mathfrak g}={\mathfrak k_0}\oplus{\mathfrak k_1}\oplus{\mathfrak p_1}\oplus{\mathfrak p_2}$ and $\dim \mathfrak k_0=1$. Let $d_1=\dim {\mathfrak k_1}$ and $d_1=\dim {\mathfrak p_1}$. Consider the following Lorentzian metric $\langle,\rangle$ given by
\begin{equation*}\langle,\rangle=a_0g|_{\mathfrak k_0}+a_1g|_{{\mathfrak k}_1}+ag|_{\mathfrak p},\end{equation*}
where $a_0<0,a>0,a_1>0$. Following the computation of the Ricci curvature given in \cite{AMS1}, we have
\[ \left\{ \begin{aligned}
  &
  r_{{\mathfrak h}_0}=-\frac{a_0}{4a^2}\frac{d_2}{(d_2+8)}-\frac{a_0}{a^2}\frac{2}{(d_2+8)},\\
  &
  r_{{\mathfrak
  h}_1}=\frac{1}{4d_1a_1}(d_1-\frac{d_2(d_2+2)}{2(d_2+8)})+
  \frac{a_1}{4d_1a^2}\frac{d_2(d_2+2)}{2(d_2+8)},\\
  &
  r_{{\mathfrak
  m}_1}=\frac{1}{2a}-\frac{1}{2a}\frac{d_3}{(d_2+8)}-
  \frac{1}{2a^2}(a_0\frac{1}{(d_2+8)}+a_1\frac{d_2+2}{2(d_2+8)}),\\
  &
  r_{{\mathfrak
  m}_2}=\frac{1}{a}\frac{4}{(d_2+8)}+\frac{1}{4a}\frac{d_2}{(d_2+8)}-
  \frac{a_0}{a^2}\frac{2}{(d_2+8)}.
\end{aligned} \right. \]
The Ricci curvature is based on an orthonormal basis corresponding to $\langle,\rangle$. Let $e_0$ be the basis of ${\mathfrak k}_0$ satisfying $\langle e_0,e_0\rangle=-1$. In addition, we normalize the metric so that $a=1$. Then $Ric^m_X=\lambda\langle,\rangle$, i.e., $\langle,\rangle$ is $m$-quasi-Einstein if and only if $X=n_0e_0$ and the following equations hold:
\[ \left\{ \begin{aligned}
  &
  \frac{a_0d_2}{4(d_2+8)}+\frac{2a_0}{d_2+8}+\frac{n_0^2}{m}=\lambda,\\
  &
  \frac{1}{4d_1a_1}(d_1-\frac{d_2(d_2+2)}{2(d_2+8)})+
  \frac{a_1}{4d_1}\frac{d_2(d_2+2)}{2(d_2+8)}=\lambda,\\
  &
  \frac{1}{2}-\frac{d_3}{2(d_2+8)}-\frac{a_0}{2(d_2+8)}-\frac{a_1(d_2+2)}{4(d_2+8)})=\lambda,\\
  &
  \frac{4}{d_2+8}+\frac{d_2}{4(d_2+8)}-
  \frac{2a_0}{d_2+8}=\lambda.
\end{aligned} \right. \]
Let $4n_0^2=pm$ since $n_0\geqslant 0,m>0$. Then $p\geqslant 0$. Putting the fourth equation back into the first three equations, we have
\[ \left\{ \begin{aligned}
  &
 a_0+\frac{d_2+8}{d_2+16}p=1,\\
  & a_0=-\frac{d_2(d_2+2)}{16d_1}a_1+\frac{d_2(d_2+2)-2d_1(d_2+8)}{16a_1d_1}+\frac{d_2+16}{8},\\
  & 6a_0-(d_2+2)a_1+d_2-4=0.
\end{aligned} \right. \]

For any case in Table $(II)$, we have two solutions of the above equations. One is $a_0=a_1=1,p=0$, which isn't a Lorenztian metric; another is
\begin{enumerate}
  \item $a_0=-\frac{279}{25},a_1=\frac{9}{25},p=\frac{4864}{375}$ for ${\mathfrak e}_8$,
  \item $a_0=-\frac{8}{3},a_1=\frac{4}{15},p=\frac{121}{27}$ for ${\mathfrak f}_4$,
  \item $a_0=-\frac{1}{2},a_1=\frac{1}{10},p=\frac{9}{4}$ for ${\mathfrak g}_2$
\end{enumerate} respectively, which gives a non-trivial $m$-quasi-Einstein Lorentzian metric. That is,
\begin{theorem}\label{2.2}
There are non-trivial $m$-quasi-Einstein Lorentzian metrics on $E_8$, $F_4$ and $G_2$.
\end{theorem}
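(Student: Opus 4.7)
The plan is to specialize, for each of the three Lie algebras $\mathfrak{e}_8$, $\mathfrak{f}_4$, $\mathfrak{g}_2$, the Ricci-curvature formulas just read off Table~(II) and then solve an explicit small algebraic system. First, I adapt Theorem~\ref{Quasi-E} to the Lorentzian regime at hand: its proof only uses that each isotropy summand $\mathfrak{p}_j \subset \mathfrak{p}$ is $\mathrm{ad}\,\mathfrak{k}$-irreducible of dimension at least two and that the metric restricts to each $\mathfrak{p}_j$ as a nonzero multiple of $B|_{\mathfrak{p}_j}$. Both conditions persist because we keep $a > 0$ on $\mathfrak{p}$, so the component-by-component argument forces $X \in \mathfrak{k}_0$ once more. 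Hence we write $X = n_0 e_0$ with $\langle e_0, e_0 \rangle = -1$, normalize $a = 1$, and note that the sign flip $\langle X, X \rangle = -n_0^2$ is exactly what produces the $+\tfrac{n_0^2}{m}$ term in the first of the four scalar equations displayed before the theorem.

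Second, the $m$-quasi-Einstein condition $Ric_X^m = \lambda \langle,\rangle$ decomposes into four scalar equations, one per $\mathrm{ad}\,\mathfrak{k}$-isotypic summand $\mathfrak{k}_0, \mathfrak{k}_1, \mathfrak{p}_1, \mathfrak{p}_2$. Setting $4 n_0^2 = pm$ with $p \geq 0$ and using the $\mathfrak{p}_2$-equation to eliminate $\lambda$ yields exactly the reduced system of three equations in $(a_0, a_1, p)$ that is listed in the text. The linear third equation gives $a_0 = \tfrac{(d_2+2)a_1 - d_2 + 4}{6}$; the first then expresses $p$ linearly in $a_1$; and feeding both into the second produces a single quadratic in $a_1$. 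One root is always $a_1 = 1$, forcing $a_0 = 1$ and $p = 0$, i.e.\ the trivial bi-invariant Einstein case; the other root is the Lorentzian candidate.

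Third, I would simply insert $(d_1, d_2) = (133, 112), (21, 28), (3, 8)$ in turn and compute. This is rational arithmetic and should recover the triples $(a_0, a_1, p) = (-\tfrac{279}{25}, \tfrac{9}{25}, \tfrac{4864}{375})$, $(-\tfrac{8}{3}, \tfrac{4}{15}, \tfrac{121}{27})$, $(-\tfrac{1}{2}, \tfrac{1}{10}, \tfrac{9}{4})$ claimed in the text. In each case $a_0 < 0 < a_1$, $a = 1 > 0$ and $p > 0$, so the metric is genuinely Lorentzian and $X = n_0 e_0 \neq 0$, producing a non-trivial $m$-quasi-Einstein Lorentzian metric on each of $E_8, F_4, G_2$.

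The main (essentially only) obstacle is conceptual rather than computational: one must verify that the component-by-component argument of Theorem~\ref{Quasi-E} really does force $X \in \mathfrak{k}_0$ after the sign on $\mathfrak{k}_0$ has been flipped. This is immediate because that argument never uses the metric on $\mathfrak{k}_0$ itself --- only the still-definite restrictions to the $\mathfrak{p}_j$'s and the $\mathfrak{k}_i$'s, $1 \leq i \leq r$, where the identity $a^2 n_i n_j = \delta_{ij} p$ (and its $\mathfrak{k}_i$-analog) is manifestly insensitive to the sign of $a$ or $a_i$. Everything after this reduction is a direct arithmetic verification.
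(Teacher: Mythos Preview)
Your proposal is correct and follows essentially the same route as the paper: reduce to $X\in\mathfrak{k}_0$ via the component-by-component argument of Theorem~\ref{Quasi-E}, write out the four scalar equations coming from the Ricci formulas for the Table~(II) data, eliminate $\lambda$ via the $\mathfrak{p}_2$-equation, and solve the resulting three-equation system in $(a_0,a_1,p)$ to isolate the non-trivial root of the quadratic in $a_1$. The only differences are expository---you spell out why the $X\in\mathfrak{k}_0$ reduction survives the sign flip on $\mathfrak{k}_0$ and describe the elimination order explicitly, whereas the paper simply asserts the former and records the final numerical solutions.
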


Theorem~\ref{main2} follows from Theorem~\ref{2.1} and \ref{2.2}.

\section{$m$-quasi-Einstein pseudo-Riemannian metrics on non-compact simple Lie groups}

Let $G$ be a compact simple Lie group with the Lie algebra $\mathfrak g$ and let $\theta$ be an involution of $\mathfrak g$. Let $${\mathfrak k}=\{x\in {\mathfrak g}|\theta(x)=x\} \text{ and }{\mathfrak p}=\{x\in {\mathfrak g}|\theta(x)=-x\}.$$
Then ${\mathfrak k}$ acts irreducibly on $\mathfrak p$, and we have ${\mathfrak g}={\mathfrak k_0}\oplus{\mathfrak k_1}\oplus\cdots\oplus{\mathfrak k_r}\oplus{\mathfrak p}$, where the dimension of the center ${\mathfrak k_0}$ of ${\mathfrak k}$ is no more than $1$. Clearly ${\mathfrak g}^{R}={\mathfrak k}\oplus i{\mathfrak p}$ is a non-compact simple Lie group. Let $G^R$ be the non-compact simple Lie group with the Lie algebra $\mathfrak g^R$.

In the following we only discuss the case when ${\mathfrak k_0}=1$. Let $g=-B$ be a bi-invariant metric on $G$. For the following left-invariant pseudo-Riemannian metric $\langle,\rangle$ on $G$ defined by
\begin{equation*}\langle,\rangle=ag|_{\mathfrak p}+a_0g|_{\mathfrak k_0}+a_1g|_{{\mathfrak k}_1}+\cdots+a_rg|_{{\mathfrak k}_r},\end{equation*}
we can define a pseudo-Rimennian metric $\langle,\rangle^R$ on $G^{R}$ by
\[ \left\{ \begin{aligned}
  & \langle iX,iY\rangle^R=-\langle X,Y\rangle,\quad  \forall X,Y\in {\mathfrak p}, \\
  & \langle X,Y\rangle^R=\langle X,Y\rangle, \quad \forall X,Y\in {\mathfrak k},\\
  & \langle {\mathfrak k_i},{\mathfrak p}\rangle^R=\langle {\mathfrak k_i},{\mathfrak k_j}\rangle^R=0,\quad \forall 0\leqslant i\not=j\leqslant r.
\end{aligned} \right. \]

\begin{proposition}\label{nonc}
Let the notations be as above. If $\langle,\rangle$ is $m$-quasi-Einstein, i.e., there exists $X\in {\mathfrak g}$ such that $Ric^m_X=\lambda \langle,\rangle$ for some $\lambda\in {\mathbb R}$, then $\langle,\rangle^R$ is $m$-quasi-Einstein with the vector $X$ and the constant $\lambda$.
\end{proposition}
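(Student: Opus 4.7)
The plan is to exploit the classical compact/non-compact duality $\mathfrak{g}^R=\mathfrak{k}\oplus i\mathfrak{p}$ at the level of Killing forms and brackets, and to observe that the formulas of Lemma~\ref{Ricci} are purely algebraic consequences of the naturally reductive structure, hence apply verbatim to pseudo-Riemannian metrics built from a bi-invariant bilinear form. Since we are in the irreducible setting, Theorem~\ref{Quasi-E} forces $X\in\mathfrak{k}_0\subseteq\mathfrak{k}$, so $X$ is simultaneously a left-invariant vector field on $G$ and on $G^R$. A direct check using the bracket rules $[X,iY]^R=i[X,Y]$ on $\mathfrak{k}\times i\mathfrak{p}$ and $[iY,iZ]^R=-[Y,Z]$ on $i\mathfrak{p}\times i\mathfrak{p}$, together with $\langle iY,iZ\rangle^R=-\langle Y,Z\rangle$, shows that $X$ is Killing for $\langle,\rangle^R$ because it is Killing for $\langle,\rangle$; the matching sign flips on bracket and metric cancel on the $i\mathfrak{p}$-block, and the $\mathfrak{k}$-block is unchanged. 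In particular $\mathfrak{L}_X^R\langle,\rangle^R=0$, so the Lie-derivative term drops out of both quasi-Einstein equations.

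Next I would compute the Killing form $B^R$ of $\mathfrak{g}^R$ block by block. Using $[\mathfrak{k},\mathfrak{k}]\subseteq\mathfrak{k}$, $[\mathfrak{k},i\mathfrak{p}]\subseteq i\mathfrak{p}$, and $[i\mathfrak{p},i\mathfrak{p}]=-[\mathfrak{p},\mathfrak{p}]\subseteq\mathfrak{k}$, a straightforward trace calculation yields $B^R|_{\mathfrak{k}}=B|_{\mathfrak{k}}$ and $B^R(iY,iZ)=-B(Y,Z)$ for $Y,Z\in\mathfrak{p}$. Combined with the definition of $\langle,\rangle^R$ this gives $\langle iY,iZ\rangle^R=a\,g^R(iY,iZ)$, where $g^R=-B^R$ is the bi-invariant (pseudo-Riemannian) metric on $G^R$. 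Hence
$$\langle,\rangle^R=a\,g^R|_{i\mathfrak{p}}+a_0\,g^R|_{\mathfrak{k}_0}+a_1\,g^R|_{\mathfrak{k}_1}+\cdots+a_r\,g^R|_{\mathfrak{k}_r},$$
which is formally the same naturally reductive form as $\langle,\rangle$, and the ratios $c_j$ determined by $B_j=c_jB$ coincide with those determined by $B_j=c_j^RB^R$ because the subalgebras $\mathfrak{k}_j$ and their intrinsic Killing forms are common to the two settings.

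I would then apply the algebraic formulas of Lemma~\ref{Ricci} in the pseudo-Riemannian regime (their derivation uses only bilinearity, not positivity). A short tracking of signs yields $A_j^R(iY,iZ)=-A_j(Y,Z)$, whence $Ric^R|_{i\mathfrak{p}}(iY,iZ)=-Ric|_{\mathfrak{p}}(Y,Z)$ while $Ric^R|_{\mathfrak{k}_j}=Ric|_{\mathfrak{k}_j}$ and all off-block components vanish. The sign flip on the $i\mathfrak{p}$ block of $Ric^R$ matches exactly the sign flip of $\langle,\rangle^R|_{i\mathfrak{p}}$, so the equation $Ric|_{\mathfrak{p}}=\lambda\langle,\rangle|_{\mathfrak{p}}$ (valid because $X^*|_{\mathfrak{p}}=0$) is equivalent to $Ric^R|_{i\mathfrak{p}}=\lambda\langle,\rangle^R|_{i\mathfrak{p}}$; on each $\mathfrak{k}_j$ the quasi-Einstein equation transfers verbatim. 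Since $X\in\mathfrak{k}_0$, the tensor $X^*\otimes X^*$ is supported on $\mathfrak{k}_0$ and agrees on both sides. Assembling these block identities gives $(Ric^R)^m_X=\lambda\langle,\rangle^R$, which is the proposition.

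The only part I expect to require any care is the bookkeeping in step two, namely verifying $B^R|_{\mathfrak{k}}=B|_{\mathfrak{k}}$, $B^R|_{i\mathfrak{p}}=-B|_{\mathfrak{p}}$, and $A_j^R=-A_j$ on $i\mathfrak{p}$, so that the signs align and the Wick-rotated equation on $i\mathfrak{p}$ survives. Everything else is automatic because $X$ stays inside $\mathfrak{k}$, where the construction is insensitive to the change of real form.
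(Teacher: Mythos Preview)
Your argument is correct, but it takes a different route from the paper. The paper's proof is a one-line complexification trick: it extends $\langle,\rangle$ complex-bilinearly to a nondegenerate symmetric form $\langle,\rangle^C$ on $\mathfrak{g}^C$, observes that its restrictions to the two real forms $\mathfrak{g}$ and $\mathfrak{g}^R$ are $\langle,\rangle$ and $\langle,\rangle^R$, and then notes that the identity $Ric^m_X=\lambda\langle,\rangle$ is a polynomial relation in the structure constants and hence extends by complex bilinearity to $\mathfrak{g}^C$ and restricts to $\mathfrak{g}^R$. Your approach instead stays on the real form $\mathfrak{g}^R$ and verifies everything block by block: you compute $B^R$, show $\langle,\rangle^R$ is again naturally reductive with the same parameters $a,a_0,\ldots,a_r$ and the same constants $c_j$, invoke the pseudo-Riemannian validity of Lemma~\ref{Ricci}, and track the sign flips on $A_j$, $Ric$, and $\langle,\rangle$ over $i\mathfrak{p}$ to see that they cancel. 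Both use $X\in\mathfrak{k}_0$ in the same essential way (so that $X$ and $X^*\otimes X^*$ live entirely on the common subalgebra). The paper's argument is shorter and conceptually cleaner; yours is more explicit and makes transparent exactly where the Wick-rotation signs match up, at the cost of the bookkeeping you flagged.
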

\begin{proof}
Let ${\mathfrak g}^C$, ${\mathfrak k_0}^C$, ${\mathfrak k_1}^C$ and ${\mathfrak p}^C$
be the complexications of ${\mathfrak g}$, ${\mathfrak k}_0$, ${\mathfrak k}_1$ and ${\mathfrak p}$ respectively. Then the pseudo-Riemannian metric $\langle,\rangle$ can be naturally extended to a non-degenerate symmetric bilinear form $\langle,\rangle^C$ on ${\mathfrak g}^C$ such that
$$\langle,\rangle^C|_{\mathfrak g\times \mathfrak g}=\langle,\rangle \text{ and } \langle,\rangle^C|_{\mathfrak g^R\times \mathfrak g^R}=\langle,\rangle^R.$$ Then we can compute $^C\nabla_YZ$ and $^CRic_X^m(Y,Z)$ for any $Y,Z\in {\mathfrak g}^C$. Then the metric $\langle,\rangle$ is $m$-quasi-Einstein, i.e., $Ric_X^m(Y,Z)=\lambda\langle Y,Z\rangle$ for any $Y,Z\in {\mathfrak g}$, then $X\in {\mathfrak k_0}$. Furthermore we have $^CRic_X^m(Y,Z)=\lambda\langle Y,Z\rangle$ for any $Y,Z\in {\mathfrak g}^C$, which implies $^RRic_X^m(Y,Z)={^C}Ric_X^m(Y,Z)=\lambda\langle Y,Z\rangle$ for any $Y,Z\in {\mathfrak g}^R$. That is, $\langle,\rangle^R$ is $m$-quasi-Einstein with the same $\lambda$.
\end{proof}

In particular, for the case when ${\mathfrak k}=\mathfrak k_0\oplus {\mathfrak k}_1$ acts irreducibly on $\mathfrak p$, we obtain in the previous section that every $a_1<0$ gives a non-trivial $m$-quasi-Einstein pseudo-Riemannian metric on the compact simple Lie group with the signature $(1+\dim{\mathfrak p},\dim {\mathfrak k_1})$ and $\lambda<0$. Then by Proposition~\ref{nonc}, each of them gives an $m$-quasi-Einstein pseudo-Riemannian metric on $G^R$ with the signature $(1,\dim{\mathfrak p}+\dim {\mathfrak k_1})$.

For the case when ${\mathfrak k}=\mathfrak k_0\oplus {\mathfrak k}_1\oplus {\mathfrak k_2}$ acts irreducibly on $\mathfrak p$ and $a=1$, every $a_2<0$ gives a non-trivial $m$-quasi-Einstein pseudo-Riemannian metric on ${\mathfrak su}(l_1+l_2)$ with the signature $(1+2l_1l_2,l_1^2+l_2^2-2)$. Then by Proposition~\ref{nonc}, each of them gives an $m$-quasi-Einstein pseudo-Riemannian metric on ${\mathfrak su}(l_1,l_2)$ with the signature $(1,(l_1+l_2)^2-2)$.

For the above cases, every metric is non-trivial. By taking the negative of the pseudo-Riemannian metric, we have Theorem~\ref{main3}.

\section{Acknowledgments}
This work is supported by NSFC (No. 11001133). We would like to thank S.Q. Deng and E. Ribeiro Jr for the helpful comments, conversation and suggestions.

\end{document}